\newtheorem{conjecture}{Conjecture}
\newtheorem{definition}{Definition}
\newtheorem{lemma}{Lemma}
\newtheorem{theorem}{Theorem}
\numberwithin{lemma}{section}
\numberwithin{equation}{section}
\numberwithin{theorem}{section}
\numberwithin{definition}{section}
\numberwithin{conjecture}{section}
\numberwithin{corollary}{section}
 \newcommand{\Hsquare}{%
  \text{\fboxsep=-.2pt\fbox{\rule{0pt}{1ex}\rule{1ex}{0pt}}}%
}
\theoremstyle{definition}
\tikzstyle{vertex}=[circle,draw,minimum size=12pt,inner sep=0pt]
\tikzstyle{selected vertex} = [vertex, fill=red!24]
\tikzstyle{edge} = [draw,thick,-]
\tikzstyle{weight} = [font=\small]
\tikzstyle{cycle edge} = [draw,line width=5pt,-,red!50]
\tikzstyle{tail edge} = [draw,line width=5pt,-,blue!50]
\tikzstyle{tree edge} = [draw,line width=3pt,-,black!20]
\journal{Discrete Applied Mathematics}
\begin{document}

\begin{frontmatter}



\title{Automating Weight Function Generation in Graph Pebbling}


\author[umd]{Dominic Flocco}
\author[dc]{Jonad Pulaj}
\author[dc]{Carl Yerger}
\affiliation[umd]{organization={Department of Mathematics, University of Maryland},
            city={College Park},
            postcode={20742}, 
            state={MD},
            country={USA}}

\affiliation[dc]{organization={Department of Mathematics and Computer Science, Davidson College},
            city={Davidson},
            postcode={28035}, 
            state={NC},
            country={USA}}

\begin{abstract}
\indent Graph pebbling is a combinatorial game played on an undirected graph with an initial configuration of pebbles. A pebbling move consists of removing two pebbles from one vertex and placing one pebble on an adjacent vertex. The pebbling number of a graph is the smallest number of pebbles necessary such that, given any initial configuration of pebbles, at least one pebble can be moved to a specified root vertex. Recent lines of inquiry apply computational techniques to pebbling bound generation and improvement.  \\ \indent Along these lines, we present a computational framework that produces a set of tree strategy weight functions that are capable of proving pebbling number upper bounds on a connected graph. Our mixed-integer linear programming approach automates the generation of large sets of such functions and provides verifiable certificates of pebbling number upper bounds. The framework is capable of producing verifiable pebbling bounds on any connected graph, regardless of its structure or pebbling properties. We apply the model to the 4\textsuperscript{th} weak Bruhat to prove $\pi(B_4) \leq 66$ and to the Lemke square graph to produce a set of certificates that verify $\pi(L\Hsquare L) \leq 96$.
\end{abstract}



\begin{keyword}


Pebbling\sep graph theory \sep integer programming \sep combinatorial optimization
\end{keyword}

\end{frontmatter}


\section{Introduction}
\label{sec:intro}
Graph pebbling is a combinatorial game played on an undirected graph with an initial configuration of pebbles. The game is composed of a sequence of pebbling moves that consist of removing two pebbles from one vertex, and placing one pebble on an adjacent vertex. The graph pebbling model was first introduced by Chung \cite{Chung1989} and can be used to model transportation networks where resources are consumed in transit, such as the dissipation of energy, the flow of information through a particular medium or the consumption of fuel by oil tankers. Thus, graph pebbling may be viewed as a network optimization model that has applications to operations research, logistics and applied science. The central question of the model asks how many pebbles are necessary to ensure that, given any initial configuration of this many pebbles, one pebble can be moved to any particular vertex. \\ 
\indent Conventional graph pebbling techniques employ topics in number theory \cite{Chung1989, Herscovici2008, Pachter1995, Wang2000}, combinatorics \cite{Czygrinow2002, Moews1992} and graph theory \cite{Blasiak2008, Bukh2006, Postle2013}. Since the conceptualization of graph pebbling in 1989 \cite{Chung1989}, there have been many interesting and useful results presented through these theoretical mediums, particularly on small graphs and families of graphs. In fact, the graph pebbling model was brought forth by Lagarias and Saks in response to a combinatorial number theory conjecture on zero-sum problems by Erdos and Lemke. These methods are successful in defining properties that provide sufficient grounds for pebbling classifications, and producing general results with diverse applications. \\ 
\indent A recent line of inquiry in the graph pebbling community explores computational approaches to calculating pebbling bounds. Calculating pebbling numbers is difficult: Watson \cite{Watson2005} shows that determining whether a configuration of pebbles is solvable is NP-complete, and Clark and Milans \cite{Milans2006} prove that deciding whether the pebbling number is at most $k$ is $\Pi^P_2$-complete. To address this complexity, we apply methods in linear and discrete optimization to the graph pebbling model, building off the work of \cite{Cranston2015}, \cite{Hurlbert2010} and \cite{kenter2018}. Namely, we use mixed-integer linear programming (MILP) to calculate pebbling bounds, building upon theoretical results and improving bounds on graphs of interest. Through exploring popular problems in graph pebbling literature, we develop a computational approach that can be applied to general networks. This computational framework is packaged into an open source graph pebbling codebase that includes MILP implementation, verification and visualization tools, and pebbling graph classifications. The modeling suite is complete with a user-interface and documentation to allow for easy experimentation by future researchers.\footnote{\url{https://github.com/dominicflocco/Graph\_Pebbling}}
\subsection{Notation \& Terminology}
\label{subsec:notation}
Let $G= (V,E)$ be a simple, connected and undirected graph with vertex set $V= V(G)$ and edge set $E=E(G)$, with $|V|= n$ vertices and $|E|=m$ edges. Define $\text{dist}(x,y)$ to be the number of edges (length) of the shortest path from vertex $x$ to $y$. The \textit{eccentricity} $\text{ecc}(G,u)$ for a vertex $u\in V$ is given by $\text{ecc}(G,u) = \max_{v\in V}\text{dist}(v,u)$, and the \textit{diameter} $\text{diam}(G)$ of $G$ is then given by $\text{diam}(G) = \max_{u\in V}\text{ecc}(G, u)$. We will also consider the Cartesian product of two graphs, denoted $\Hsquare$, where the product of $G_1=(V_1, E_1)$ and $G_2=(V_2, E_2)$ is defined by the vertex set $V(G_1\Hsquare G_2) = \{(v_1, v_2) : v_1 \in V(G_1), v_2\in V(G_2)\}$ and edge set \begin{align} E(G_1\Hsquare G_2) = \{\{(v_1, v_2), (u_1,u_2)\} : &(v_1 = u_1 \text{ and } (v_2,u_2)\in E(G_2)) \\ &\text{ or } (v_2 = u_2 \text{ and } (v_1, u_1) \in E(G_1))\}. \nonumber \end{align} \\ 
\indent For a graph $G= (V, E)$, a \textit{configuration} of pebbles is a function $C : V \rightarrow \mathbb{Z}_{\geq 0}$. Let $C(v)$ be the number of pebbles placed on vertex $v\in V$, and define the \textit{size} of the configuration as $|C| = \sum_{v\in V}C(v)$, which is the number of pebbles placed on all vertices in $G$. A \textit{pebbling move} from $u$ to $v$ along an edge $(u,v) \in E$ consists of removing two pebbles from vertex $u\in V$ and placing one pebble at adjacent vertex $v\in V$. We say that a pebble can be \textit{moved} to the \textit{root} vertex $r$ if we can repeatedly apply pebbling steps to $G$ such that $r$ has one pebble in the resulting configuration. 
\\ \indent Consider a graph $G$ rooted at $r$ with a configuration of pebbles $C$. We define the \textit{rooted pebbling number} $\pi(G,r)$ as the smallest integer $k$ such that for any configuration of $|C|=k$ pebbles onto the vertices of $G$, one pebble can be moved to the specified root vertex $r$. The \textit{pebbling number} $\pi(G)$ is the smallest integer $k$ such that for any configuration of $k$ pebbles onto the vertices of $G$, one pebble can be moved to each $r\in V(G)$. Given a configuration of pebbles $C$, if it is possible to move a pebble to the root vertex $r$ by a sequence of pebbling moves, then we say that $C$ is $r$-\textit{solvable}. If not, $C$ is $r$-\textit{unsolvable}. If $C$ is $r$-solvable for each $r\in V$ then we say $C$ is \textit{solvable}, and \textit{unsolvable} otherwise. For other important graph theoretic concepts and definitions, the reader is referred to \cite{West2001}.\\ 
\subsection{Results}
\label{subsec:intro-results}
This paper explores the use of linear and mixed-integer linear programming to prove pebbling number bounds on a variety of graphs of interest. We offer the theoretical framework, implementation and formal definition of two MILPs that build on the foundation set by Cranston et al. \cite{Cranston2015} and Hurlbert \cite{Hurlbert2010}. Our optimization problems generate larger, more useful sets of weight functions that improve pebbling bounds previously found with manually generated strategies. Namely, we present the Tree Strategy Optimization Problem $\mathbf{TS}_{G,r}$, which uses mixed-integer linear programming to generate sets of tree strategies that minimize the pebbling number bound. Then, we leverage the symmetry of tree strategies in Cartesian product graphs to reduce the number of constraints in $\mathbf{TS}_{G,r}$ in the Symmetric Tree Strategy Optimization Problem $\mathbf{STS}_{G,r}$. Together, these optimization problems produce useful weight functions that have the potential to solve a plethora of graph pebbling models and have proven successful in improving pebbling bounds on a number of graphs of interest in the literature, such as the Lemke square $L\Hsquare L$ and 4\textsuperscript{th} weak Bruhat $B_4$. \\ 
\indent The optimization problems presented in our paper show significant promise in improving pebbling bounds on a number of important families of graphs. Most notably, we prove that $\pi(L\Hsquare L) \leq 96$, improving the upper bound of $108$ found in \cite{Hurlbert2010}. Kenter \cite{kenter2018, Kenter2019} uses an integer programming approach to numerically suggest evidence that $\pi(L\Hsquare L)\leq 91$ and then improves this bound to $85$. While these results are impressive, our mixed-integer programming framework not only provides certificates that can be checked by hand but also bypasses the chance of error imposed by floating-point arithmetic possible in Kenter's approach, a problem that Cook \cite{Cook2013} shows possible. Additionally, the tree strategy visualizations generated by our MILP approach can provide intuition into the underlying structures of the graphs and insight into their pebbling properties. We also prove that $\pi(B_4) \leq 66$ using only tree strategies, further improving the results of Hurlbert \cite{Hurlbert2010}. Cranston et al. \cite{Cranston2015} show that this bound can be improved to 64 using using non-tree weight functions.\\ 
\indent The paper is organized as follows. Section \ref{sec:background} presents the necessary graph pebbling theoretical framework upon which our mixed-integer programs rely. Then, Section \ref{sec:compute} offers the formal definition of the MILP framework and provides a theoretical discussion on the model. Lastly, Section \ref{sec:results} displays important and novel results obtained through these techniques, and   Section \ref{sec:conclusion} renders concluding remarks.

\tikzstyle{vertex}=[circle,fill=black!10,minimum size=12pt,inner sep=0pt]
\tikzstyle{selected vertex} = [vertex, fill=red!24]
\tikzstyle{edge} = [draw,thick,-]
\tikzstyle{weight} = [font=\small]
\tikzstyle{cycle edge} = [draw,line width=3pt,-,red!50]
\tikzstyle{tail edge} = [draw,line width=3pt,-,blue!50]
\tikzstyle{tree edge} = [draw,line width=3pt,-,black!50]

\section{Background} \label{sec:background}
The following discussion introduces theoretical results that lay the foundation for the novel linear optimization approach presented in Section \ref{sec:compute}. In addition, we survey the existing literature on computational graph pebbling techniques, and discuss how our work relates these recent efforts. 
\subsection{Graph Products}
In the initial formalization of the graph pebbling problem in Chung \cite{Chung1989} the reader is left with a conjecture by Ronald Graham. Since its proposition, Graham's conjecture has been of great interest to the pebbling community.
\begin{conjecture}[Graham] For all graphs $G$ and $H$, we have $\pi(G\Hsquare H)\leq \pi(G)\pi(H)$. 
\label{conj:graham}
\end{conjecture}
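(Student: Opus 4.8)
The plan is to attack Conjecture~\ref{conj:graham} through the weight-function (tree-strategy) machinery that this paper develops, though I should say at the outset that I do not expect to settle it in full generality: it is one of the central open problems in the area, and what follows is the natural first line of attack rather than a complete argument. First I would fix roots $r\in V(G)$ and $s\in V(H)$ and an arbitrary configuration $C$ on $G\Hsquare H$ with $|C|=\pi(G)\pi(H)$, and aim to show $C$ is $(r,s)$-solvable. The structural fact to exploit is that $G\Hsquare H$ fibers over both factors: for each $v\in V(H)$ there is a copy $G_v$ of $G$, and for each $u\in V(G)$ a copy $H_u$ of $H$. The construction is then a two-phase pebbling: in phase one, use moves inside the $H$-fibers to consolidate pebble weight onto a single fiber $H_u$ (ideally $H_r$), and in phase two finish inside that fiber using $s$-solvability of $H$.

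To make phase one work one needs the ``columns'' of $C$ (the restrictions $C_v$ of $C$ to the copies $G_v$) to be $G$-solvable on average, and this is exactly where a certificate of $\pi(G)$ produced by $\mathbf{TS}_{G,r}$ should enter. The idea is to take the tree-strategy weight inequalities that certify $\pi(G)$, apply them fiber-by-fiber over all copies $G_v$, and sum: a global pebble shortfall on $G\Hsquare H$ should translate into a violated weight inequality for $H$, contradicting $|C|=\pi(G)\pi(H)$. Carrying this out rigorously requires a collapsing/covering lemma stating that even a column holding fewer than $\pi(G)$ pebbles can still push a controlled amount of weight through itself toward $G_r$, so that the weight accounting on the $H$-side remains valid; this lemma, and the bookkeeping reconciling the two fiber directions, is the technical heart of the matter.

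The step I expect to be the genuine obstacle---and the reason the conjecture remains open---is that tree-strategy weight functions are not tight in general. The paper's own results on the Lemke square show the true pebbling number lying strictly below what a fixed family of strategies certifies, and there are graphs for which no family of tree strategies certifies $\pi(G)$ at all. Consequently the ``sum the certificates across fibers'' argument can at best prove $\pi(G\Hsquare H)\le t(G)\,t(H)$, where $t(\cdot)$ denotes the best tree-strategy bound, which is weaker than $\pi(G)\pi(H)$ whenever either factor has a positive tree-strategy gap. Closing that gap would require either (i) restricting to classes where $\mathbf{TS}$ is provably tight---recovering the classically known cases such as paths, trees, and Class-$0$ graphs (those with $\pi(G)=n$)---or (ii) replacing tree strategies in the fiber argument by the more general non-tree weight functions, at the cost of losing the clean product structure the construction relies on. I would therefore first target the conjecture for the families where tree strategies are tight and treat the general case as beyond the reach of this method alone.
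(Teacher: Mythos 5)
This statement is Graham's Conjecture, which the paper does not prove: it is stated as an open conjecture, the paper only surveys the known partial cases (trees, cycles, cliques against graphs with the 2-pebbling property, complete multipartite graphs, fans and wheels) and in fact treats the Lemke square as a candidate \emph{counterexample}. So there is no proof in the paper to compare your attempt against, and your decision to present an attack sketch rather than claim a proof is the correct reading of the situation. Your sketch also identifies the right obstruction to the weight-function route: summing tree-strategy certificates fiber-by-fiber can at best bound $\pi(G\Hsquare H)$ by the product of the \emph{tree-strategy} bounds of the factors, and since those bounds are generally not tight (the paper itself notes that the tree-strategy bound $66$ for $B_4$ is beaten by non-tree weight functions giving $64$), this cannot yield $\pi(G)\pi(H)$ in general. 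This is consistent with the literature: the Weight Function and Covering Lemmas give upper bounds for a fixed root, not a mechanism for transferring solvability across the two fiber directions of a product.

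Two factual corrections to your framing. First, the paper's Lemke-square results do not show the true pebbling number lying strictly below what tree strategies certify; they show $\pi(L\Hsquare L)\le 96$ via tree strategies, while the suggestion that $\pi(L\Hsquare L)\le 85$ is Kenter's numerical (floating-point) evidence, which the authors explicitly do not treat as a verified bound. Second, Graham's conjecture is not among the ``classically known'' cases for arbitrary Class-$0$ graphs; the verified families are those listed above, so restricting to graphs where tree strategies are tight would not automatically recover a known theorem. With those caveats, your assessment that the general case is beyond the reach of this machinery alone matches the state of the problem as the paper presents it.
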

While many useful results in support of this conjecture have been offered, the conjecture has yet to be proven for all graphs. The conjecture is verified when $G$ and $H$ are trees \cite{Moews1992}, cycles \cite{Herscovici2008}, a clique and a graph with the 2-pebbling property \cite{Chung1989} and when $G$ has the 2-pebbling property and $H$ is a complete multi-partite graph \cite{Wang2000}. Feng \cite{Feng2001} also verified the conjecture for complete bipartite graphs and wheels or fans. Later, Hurlbert \cite{Hurlbert2000} proves Graham's conjecture for a more general product. While these results are encouraging, there is evidence to suggest that the Lemke graph $L$, shown in Figure \ref{fig:lemke-template}, may provide a counterexample to the conjecture, and computational methods could be key to finding a counterexample. Note that $\pi(L, r) = 8$ for all $r\in V(L)$, so $\pi(L)\pi(L) = 64$; thus, if one can show $\pi(L\Hsquare L) > 64$; Graham's conjecture will be disproved. As a result, our work focuses on the application of the MILP computational approaches to weight function generation to the Cartesian product of graphs.
\\ 
\begin{figure}
    \centering
    \begin{tikzpicture}[scale=0.5, auto,swap, vertex/.style={circle,draw,minimum size=6mm,inner sep=0}]
    \foreach \pos/\name/\label in {{(0,0)/v_6/}, {(0,4)/v_5/}, {(0,-4)/v_7/}, {(4,2)/v_3/}, {(8,2)/v_1/}, {(4,-2)/v_4/}, {(8,-2)/v_2/}, {(-4,0)/v_8/}}
        \node[vertex, label={\small $\label$}] (\name) at \pos {$\name$};
    \foreach \source/ \dest in {{v_8/v_5}, {v_8/v_7}, {v_8/v_6}, {v_7/v_3}, {v_7/v_4}, {v_6/v_3}, {v_6/v_4}, {v_5/v_3}, {v_5/v_4}, {v_4/v_2}, {v_3/v_1}, {v_2/v_1}, {v_8/v_4}}
        \path[edge] (\source) -- (\dest);
\end{tikzpicture}
    \caption{The first minimal non-isomorphic Lemke graph $L$ on $n=8$ vertices. }
    \label{fig:lemke-template}
\end{figure}
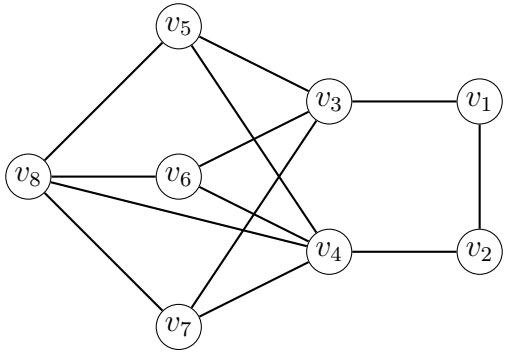

\subsection{Computational Approaches} 
\label{subsec:back-comp-approach}
Due to the difficulty in calculating tight pebbling bounds, the pebbling literature has seen a recent rise in computational pursuits. The algorithmic approach has been successful in computing pebbling numbers and bounds on graphs with useful characteristics. Bekmetjev and Cusack \cite{Bekmetjev2009} explore algorithms to determine the solvabilility of pebbling configurations on graphs with diameter two, proving the existence of an algorithm whose running time depends on the vertex connectivity and the size of the graph. Sieben \cite{Sieben2010} extends this approach and presents an algorithm that computes the pebbling number on all connected graphs with up to 9 vertices with diameter 2. The implementation is sufficiently fast on a large number of small graphs, yet does not scale well as the number of vertices increase. While the algorithmic approach has proven successful for small graphs with certain properties, other promising computational approaches have recently appeared in the literature. \\ 
\indent Recently, Kenter \cite{kenter2018, Kenter2019} presents an integer-programming (IP) approach to bounding $\pi(L\Hsquare L)$ and Cartesian products more generally. This approach is specific to $L\Hsquare L$, yet can be expanded to provide bounds on the pebbling numbers of different types of graph products. The model leverages the 2-pebbling property of $L$ to refine a more general IP approach that offers numerical evidence to suggest $\pi(L\Hsquare L)\leq 85$. In contrast, our MILP approach chooses an ideal set of the exponentially many subtrees in a graph to generate weight functions that offer verifiable certificates of pebbling bounds. \\ 
\indent In other recent efforts, Cusack et al. \cite{Cusack2018} explore the two-pebbling property more deeply and introduce a novel method to create infinite families of Lemke graphs. Additionally, their work describes configurations that violate the two-pebbling property on Lemke graphs, and the reader is left with a conjecture on a more general infinite family of Lemke graphs. The authors extend this work in \cite{Cusack2019} to provide a discussion on computational approaches to solvability and reachibility, and present an algorithm that leverages the two-pebbling property. The technique enumerates all minimally violating configurations and is employed to find all Lemke graphs on 9 or fewer vertices. In our work, we add to this growing literature of computational graph pebbling approaches by presenting a general technique to pebbling bound generation.

\subsection{Weight Functions} 
\label{subsubsec:back-weight-funcs}
\indent Linear optimization techniques have proven successful in improving pebbling number bounds on large graphs and represent a state-of-the-art computational approach to pebbling. Hurlbert \cite{Hurlbert2010} first introduces a linear optimization problem to compute upper bounds on pebbling numbers and lays the foundation for the following investigation. The foundational tool used in the linear optimization problem is Weight Function Lemma \ref{lem:weight-func}.\\
\indent Let $G$ be a graph and $\mathcal{T}$ be a subtree of $G$ rooted at $r$. For each vertex $i\in V(\mathcal{T}) \backslash \{r\}$, let $i^+$ denote the \textit{parent} of $i$, where $\text{dist}(i^+, r) = \text{dist}(i, r) - 1$. 
\begin{definition}[Tree Strategy] 
Let $G = (V,E)$ be an undirected graph and let $r\in V(G)$ be an arbitrary root vertex. We call $\mathcal{T} = (V, E, w)$, with vertex set $V(\mathcal{T})$, edge set $E(\mathcal{T})$ and nonnegative weight function $w: V(\mathcal{T}) \rightarrow \mathbb{R}_{\geq 0}$, a valid tree strategy when $w$ satisfies the following three properties: (1) $w(r) = 0$, (2) $w(i^+) \geq 2w(i)$ for all $i\in V(\mathcal{T}) \backslash\{r\}$ that are not adjacent to $r$, and (3) $w(i)=0$ if $i\notin V(\mathcal{T})$. 
\label{def:tree-strategy}
\end{definition} 
Note that the weight function $w$ may also be viewed as a vector $w\in \mathbb{R}^{n}_{\geq 0}$ representing the $n= |V(G)|$ weights of the tree strategy, where $w_i$ corresponds to the weight on $v_i$. We can also define a configuration of pebbles $C\in \mathbb{Z}^{n}_{\geq 0}$ in a similar manner and let $\mathbf{1}_G$ be the vector on $V(G)$ in which every entry is 1. Weight Function Lemma \ref{lem:weight-func} permits the consideration of all subtrees rooted at $r$ simultaneously by utilizing the conic combination of strategies. We will see that the ability to simultaneously consider all subtrees provides a more powerful and versatile approach to pebbling upper bound calculation.
\begin{lemma}[Weight Function Lemma] \cite{Hurlbert2010}
Let $\mathcal{T}$ be a tree strategy of $G$ rooted at $r$, with associated weight function $w$. If $C$ is an $r$-unsolvable configuration of pebbles on $V(G)$, then $w\cdot C \leq w \cdot \mathbf{1}_G$.
\label{lem:weight-func}
\end{lemma}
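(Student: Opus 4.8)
The plan is to follow the weighted pebble count $w \cdot C$ through a judiciously chosen sequence of pebbling moves and to compare its value at a ``stuck'' configuration with $w \cdot \mathbf{1}_G$. The first step is to extract from $r$-unsolvability the facts that $C(r) = 0$ and, more usefully, that \emph{every} configuration $C'$ reachable from $C$ satisfies $C'(r) = 0$ and $C'(j) \leq 1$ for every vertex $j$ adjacent to $r$; otherwise a single additional move would deposit a pebble on $r$, contradicting $r$-unsolvability.

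The second step is to run a greedy process confined to the edges of $\mathcal{T}$: while some vertex $i \in V(\mathcal{T}) \backslash \{r\}$ with $i^{+} \neq r$ carries at least two pebbles, move one pebble from $i$ to its parent $i^{+}$; never move a pebble onto $r$ itself. Since each pebbling move strictly decreases the total number of pebbles, this process terminates at some configuration $C^{\ast}$ reachable from $C$.

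The third step is to bound the two sides. Going forward along the process, each move $i \to i^{+}$ changes $w \cdot C$ by $w(i^{+}) - 2w(i)$, which is nonnegative by property~(2) of the tree strategy — applicable exactly because the restriction $i^{+} \neq r$ forces $i$ to be non-adjacent to $r$ — so $w \cdot C \leq w \cdot C^{\ast}$. At the terminal configuration, every vertex $i \in V(\mathcal{T}) \backslash \{r\}$ with $i^{+} \neq r$ satisfies $C^{\ast}(i) \leq 1$ by maximality of the process, every vertex $j$ adjacent to $r$ satisfies $C^{\ast}(j) \leq 1$ by the first step, $w(r) = 0$ by property~(1), and $w$ vanishes off $V(\mathcal{T})$ by property~(3); hence $w \cdot C^{\ast} = \sum_{i \in V(\mathcal{T}) \backslash \{r\}} w(i)\,C^{\ast}(i) \leq \sum_{i \in V(\mathcal{T})} w(i) = w \cdot \mathbf{1}_G$. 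Combining, $w \cdot C \leq w \cdot C^{\ast} \leq w \cdot \mathbf{1}_G$.

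The crux — and essentially the only point demanding care — is the treatment of the vertices adjacent to $r$: property~(2) is silent there, so the greedy process must be barred from firing across those edges, and the bound $C^{\ast}(j) \leq 1$ for neighbours $j$ of $r$ must be supplied by $r$-unsolvability rather than by the stopping rule. The remaining ingredients are routine: keeping the direction of the weight inequality straight (it is \emph{non-decreasing} toward the root, a mild surprise) and noting that the pebble count alone forces termination.
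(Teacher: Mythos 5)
Your proof is correct and rests on the same mechanism as the paper's sketch: pebbling moves toward the root cannot decrease $w\cdot C$ because $w(i^+)\geq 2w(i)$, with the neighbors of $r$ handled separately via $r$-unsolvability. You simply run the argument in the direct direction (greedily smoothing an unsolvable configuration until every positively weighted vertex holds at most one pebble) rather than the paper's contrapositive phrasing, so it is essentially the same proof, stated a bit more carefully about termination and the vertices adjacent to $r$.
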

The proof idea is rather intuitive. If we suppose that $C$ is a configuration such that $w\cdot C > w \cdot \mathbf{1}_G$, there must be some vertex $v\in V(\mathcal{T})$ that has at least two pebbles. Note that if $v = r$ or $v$ is adjacent to $r$, then the proof is trivial. Hence, we assume $\text{dist}(v,r)\geq 2$. Now, make a pebbling move from $v$ towards the root to $v^+$ and denote the resulting configuration $C'$. By the definition of a weight function $w$, we have $w(v^+) \geq 2w(v)$, so $w\cdot C' \geq w\cdot C >  w\cdot\mathbf{1}_G $. Repeating this process we can eventually move a pebble to root $r$. It follows that for every graph $G$ and root vertex $r$, the pebbling number is bounded above by one plus the number of pebbles in the largest configuration $C$ such that $w\cdot C \leq w \cdot \mathbf{1}_G$. \\ 
\subsection{Linear Programming Approach} 
\label{subsec:lp-approach}
\indent The Weight Function Lemma \ref{lem:weight-func} allows the definition of a linear optimization problem capable of computing bounds on large graphs and opens the door to a new computational approach with the potential of generating impressive results on many graphs of interest. Suppose we have $T$ tree strategies indexed $t=1,2, \ldots, T$ in graph $G$ rooted at $r$. We now arrive at the following integer linear program (LP) first offered by Hurlbert \cite{Hurlbert2010}: 
\begin{subequations} \label{eq:lin-opt}
\begin{align}
\label{eq:lin-optim1}
\text{Maximize:}& \sum_{v\in V \backslash \{r\}} C(v)\\ \text{Subject to: }&  w_t \cdot C \leq w_t \cdot \mathbf{1}_G \qquad t = 1, \ldots, T.
\label{eq:lin-optim2}
\end{align}
\end{subequations}
Objective function (\ref{eq:lin-optim1}) maximizes the size of the configuration, while constraint (\ref{eq:lin-optim2}) ensures that the configuration is not trivially solvable by Lemma \ref{lem:weight-func}. Let $z_{G,r}$ be the optimal value of (\ref{eq:lin-opt}) and let $\hat{z}_{G,r}$ be the optimum of the linear relaxation. Then, $z_{G,r}\leq \lfloor \hat{z}_{G,r}\rfloor$ and it follows from Lemma \ref{lem:weight-func} that $\pi(G,r) \leq z_{G,r} + 1 \leq \lfloor\hat{z}_{G,r}\rfloor + 1$. Therefore, LP (\ref{eq:lin-opt}) can be used to prove the upper bound on pebbling numbers with intelligently designed weight functions. If we let $w'$ be a conic combination of the associated weight functions for each of the $T$ tree strategies $w_1, \ldots, w_T$, then this idea generalizes to the following Lemma. 
\begin{lemma}[Covering Lemma]\cite{Cranston2015} Given a graph $G$ rooted at $r\in V(G)$, let $w'$ be a conic combination of tree strategies for $r$, and let $K$ and $M$ be positive constants. If $w'(v)\geq K$ for all $v\in V(G) \backslash \{r\}$ and $\sum_{v\in V(G)\backslash \{r\}} w'(v) < M$, then $\pi(G,r) \leq \left\lfloor\frac{M}{K}\right\rfloor + 1$. 
\label{lem:covering}
\end{lemma}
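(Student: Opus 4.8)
The plan is to reduce the Covering Lemma to the Weight Function Lemma~\ref{lem:weight-func} applied to the conic combination $w'$ itself. First I would observe that a conic combination of tree strategies, while not literally a tree strategy, still satisfies the conclusion of Lemma~\ref{lem:weight-func}: if $w' = \sum_{t} \lambda_t w_t$ with $\lambda_t \ge 0$ and each $w_t$ a valid tree strategy, then for any $r$-unsolvable configuration $C$ we have $w_t \cdot C \le w_t \cdot \mathbf{1}_G$ for every $t$, and taking the nonnegative combination gives $w' \cdot C \le w' \cdot \mathbf{1}_G$. This is the key structural fact and the only place the pebbling dynamics enter.

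Next I would run the contrapositive. Suppose $\pi(G,r) > \lfloor M/K \rfloor + 1$; then there exists an $r$-unsolvable configuration $C$ with $|C| \ge \lfloor M/K \rfloor + 1$, i.e.\ $\sum_{v \in V(G)} C(v) \ge \lfloor M/K \rfloor + 1$. Since $w'(v) \ge K$ for all $v \in V(G) \setminus \{r\}$, and the $r$-coordinate contributes nothing (a maximal unsolvable configuration carries no pebbles usefully on $r$, or more simply $w'(r)$ times $C(r)$ is nonnegative and $w'(r)=0$ as $r$ is the root of each tree strategy), I would bound
\begin{align}
w' \cdot C \;=\; \sum_{v \in V(G) \setminus \{r\}} w'(v)\, C(v) \;\ge\; K \sum_{v \in V(G) \setminus \{r\}} C(v) \;\ge\; K\left(\left\lfloor \frac{M}{K}\right\rfloor + 1\right).
\end{align}
On the other hand, the hypothesis $\sum_{v \in V(G) \setminus \{r\}} w'(v) < M$ gives $w' \cdot \mathbf{1}_G < M$. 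Combining with the Weight Function inequality yields $K(\lfloor M/K \rfloor + 1) \le w' \cdot C \le w' \cdot \mathbf{1}_G < M$, hence $\lfloor M/K \rfloor + 1 < M/K$, contradicting the elementary fact that $\lfloor x \rfloor + 1 > x$ for all real $x$.

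A small subtlety I would need to handle carefully is the role of the root coordinate: in a maximal $r$-unsolvable configuration one can assume $C(r) = 0$ (an extra pebble on $r$ would make it solvable), so restricting the sum to $V(G) \setminus \{r\}$ costs nothing, and in any case $w'(r) = 0$ because each constituent tree strategy has weight $0$ at $r$ by property~(1) of Definition~\ref{def:tree-strategy}. I expect this bookkeeping — rather than any deep step — to be the only place requiring care; the main content is simply the observation that conic combinations inherit the Weight Function inequality, after which the result is a one-line floor-function estimate.
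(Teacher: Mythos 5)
Your proof is correct and takes essentially the approach the paper intends: the paper states the Covering Lemma without proof (citing Cranston et al.), and its surrounding discussion derives pebbling bounds from the Weight Function Lemma exactly as you do, by observing that a conic combination inherits the inequality $w'\cdot C \leq w'\cdot \mathbf{1}_G$ for every $r$-unsolvable configuration $C$ and then bounding the size of such a $C$. Your bookkeeping at the root (any $r$-unsolvable configuration has $C(r)=0$, and $w'(r)=0$ by property (1) of Definition \ref{def:tree-strategy}) and the concluding floor estimate are both sound.
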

For a conic combination of tree strategies $w'$, the constant $K$ which yields the tightest upper bound on $\pi(G,r)$ is 
\begin{equation}
    K = \min_{v\in V(G)\backslash\{r\}} w'(v) 
    \label{eq:K}.
\end{equation}
Hence, $K$ is the minimum vertex weight over all tree strategies; and, $M$ is the sum of vertex weights over all tree strategies plus some $\epsilon >0$. Namely, 
\begin{equation}
    M = \left[\sum_{v\in V(G)\backslash \{r\}} w'(v)\right] + \epsilon,
\end{equation}
for $\epsilon >0$. In practice, $M$ is quite large, so $\epsilon = 1$ is the conventional choice for pebbling bound calculation with Lemma \ref{lem:covering}.
Defined in this way, the value $\left\lfloor \frac{M}{K}\right\rfloor$ can be interpreted as the average tree weight across all strategies. The difficulty in using Lemma \ref{lem:covering} to produce a low upper bound on $\pi(G,r)$ is in choosing a small subset of the potentially exponential subtrees of a graph to generate the best possible pebbling bound. The approach to this problem is unclear and will be the focus of our work. 
 \\ 
\indent Hurlbert \cite{Hurlbert2010} uses this approach to obtain results on many families of graphs, namely trees, cycles, cubes, Petersen, Lemke and 4\textsuperscript{th} weak Bruhat. Cranston et al. \cite{Cranston2015} build upon this technique and generalize the set of weight functions beyond trees. This generalization proves successful in improving the bounds on the Lemke graph, 3-cube and 4\textsuperscript{th} weak Bruhat graph from \cite{Hurlbert2010}. The Weight Function Lemma \ref{lem:weight-func}, Covering Lemma \ref{lem:covering}, and the accompanying linear optimization problem (\ref{eq:lin-opt}), introduced and explored in \cite{Hurlbert2010} and \cite{Cranston2015}, prove computationally tractable and successful in computing pebbling bounds on large graphs. In our research, we explore and extend this optimization approach and further exhibit its versatility and strengths by automating the generation of weight functions.

\section{Mixed-Integer Linear Programming Approach}
\label{sec:compute}
\indent We introduce a tree strategy optimization problem to automate the task of tree generation on a given graph $G$ rooted at $r$. As there are an exponential number of subtrees in $G$, our task is to choose an \textit{ideal} set of strategies that produce low pebbling bounds using Covering Lemma \ref{lem:covering}. To do so, we fix the number of tree strategies $T$ as a parameter to the linear optimization problem and index tree strategies from the ordered set $\{1,2, \ldots , T\} = [T]$. Further, we define parameter $\ell$ to be an upper bound on the tree diameter, formally defined as $\ell =\max_{v\in \mathcal{T}}\text{dist}(r,v)$, or the maximum eccentricity of any vertex in the tree. We wish to minimize the average tree weight to obtain a small pebbling number bound. Thus, we seek a set of tree strategies that minimize the average tree weight $M$ while requiring $K \geq T$. Hence, we assume that the vertex with minimum weight has average weight of one. \\ 

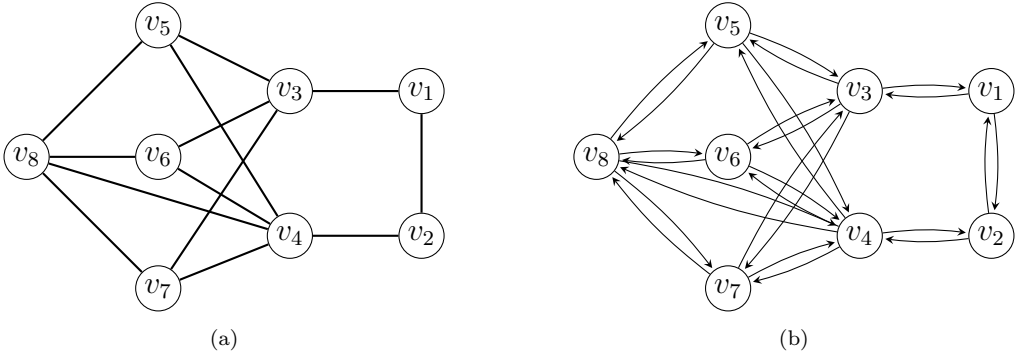
\begin{figure}[tb]
    \centering
    \begin{subfigure}{0.45\textwidth}
    \centering
    \begin{tikzpicture}[scale=0.35, auto,swap, vertex/.style={circle,draw,minimum size=6mm,inner sep=0}]
    \foreach \pos/\name/\label in {{(0,0)/v_6/}, {(0,5)/v_5/}, {(0,-5)/v_7/}, {(5,2.5)/v_3/}, {(10,2.5)/v_1/}, {(5,-3)/v_4/}, {(10,-3)/v_2/}, {(-5,0)/v_8/}}
        \node[vertex, label={\footnotesize $\label$}] (\name) at \pos {$\name$};
    \foreach \source/ \dest in {{v_8/v_5}, {v_8/v_7}, {v_8/v_6}, {v_7/v_3}, {v_7/v_4}, {v_6/v_3}, {v_6/v_4}, {v_5/v_3}, {v_5/v_4}, {v_4/v_2}, {v_3/v_1}, {v_2/v_1}, {v_8/v_4}}
        \path[edge] (\source) -- (\dest);
\end{tikzpicture}
\caption{}
\end{subfigure}
\hspace{0.5cm}
\begin{subfigure}{0.45\textwidth}
\centering
\begin{tikzpicture}[scale=0.35, 
                    auto,
                    swap,
                    > = stealth,
                    shorten > = 1pt,
                    vertex/.style={circle,draw,minimum size=6mm,inner sep=0}]
    \foreach \pos/\name/\label in {{(0,0)/v_6/}, {(0,5)/v_5/}, {(0,-5)/v_7/}, {(5,2.5)/v_3/}, {(10,2.5)/v_1/}, {(5,-3)/v_4/}, {(10,-3)/v_2/}, {(-5,0)/v_8/}}
        \node[vertex, label={\footnotesize $\label$}] (\name) at \pos {$\name$};
    \foreach \source/ \dest in {{v_8/v_5}, {v_8/v_7}, {v_8/v_6}, {v_7/v_3}, {v_7/v_4}, {v_6/v_3}, {v_6/v_4}, {v_5/v_3}, {v_5/v_4}, {v_4/v_2}, {v_3/v_1}, {v_2/v_1}, {v_8/v_4}}
        \path[->] (\source) edge [bend left=7]  (\dest);
    \foreach \source/ \dest in {{v_8/v_5}, {v_8/v_7}, {v_8/v_6}, {v_7/v_3}, {v_7/v_4}, {v_6/v_3}, {v_6/v_4}, {v_5/v_3}, {v_5/v_4}, {v_4/v_2}, {v_3/v_1}, {v_2/v_1}, {v_8/v_4}}
        \path[->] (\dest) edge [bend left=7]  (\source);
\end{tikzpicture}\caption{}
\end{subfigure}
    \caption{Example of transformation of (a) undirected Lemke graph $G = (V,E)$ to (b) bidirected graph $G' = (V,A)$ for use in the MILP formulation. Each undirected edge $(i,j) \in E$ has two corresponding directed edges, also refered to as arcs, $(i,j)$ and $(j,i)$ in arc set $A$.}
    \label{fig:bidirect}
\end{figure}
\subsection{Decision Variables}
\indent We begin by specifying an arbitrary root vertex $r\in V(G)$ and transforming $G$ to a bidirected graph $G' = (V,A)$ to capture child-parent vertex relationships. Specifically, for all undirected edges $(i,j)\in E(G)$, add directed edges $(i,j)$ and $(j,i)$ to the arc set $A(G')$. The relationship between undirected graph $G$ and bidirected graph $G'$ is shown in Figure \ref{fig:bidirect}. Hence, if the original, undirected graph $G$ has $|E(G)| = k$ edges, the bidirected graph $G'$ has $|A(G')| = 2k = m$ directed edges. For the remainder of the paper, we refer to directed edges as arcs and denote them $(i,j) \in A$, where $i\in V(G')$ and $j \in V(G')$ are source and destination vertices, respectively. Next, define a set of integer decision variables $\mathcal{X} \in \{0,1\}^{m\times T}$ such that $X^t\in \{0,1\}^{m}$ represents the arcs of $G'$ in strategy $t$:
\begin{equation}
    x_{i,j}^t = \begin{cases} 1 \quad \text{ if } (i,j) 
    \in E(t)\\ 0 \quad \text{ if } (i,j) \notin E(t), \quad \end{cases}
\end{equation}
where $t\in [T]$ is a tree strategy with vertex set $V(t)$ and edge set $E(t)$, and $(i,j)\in A$ is an arc in $G'$. Note that $x_{i,j}^t$ and $x_{j,i}^t$ are distinct decision variables that represent arcs $(i,j)$ and $(j,i)$ in opposite directions. Further, define a set of integer decision variables $\mathcal{Y} \in \{0,1\}^{n\times T}$ such that $Y^t \in \{0,1\}^{n}$ represents the vertices of $G$ in strategy $t$:
\begin{equation}
    y_{i}^t = \begin{cases} 1 \quad \text{ if } i \in V(t)\backslash \{r\} \\ 0 \quad \text{ if } i \notin V(t)\backslash \{r\}. \quad \end{cases}
\end{equation}
Lastly, define a set of continuous decision variables $\mathcal{Z} \in \mathbb{R}_{\geq 0}^{n \times T}$ where $Z^t \in \mathbb{R}_{\geq 0}^{n}$ represents the weights on the vertices of $G$ in strategy $t$. We denote $z_i^t$ as the weight of vertex $i\in V$ in strategy $t$, where $0 \leq z_i^t\leq 2^{\ell - 1}$ for all $i\in V(G)$.

\subsection{Linear Optimization Problem}
The goal of the proposed mathematical program is to produce a set of tree strategies that minimizes the average tree weight. The tree strategies will be structured such that they bound pebbling numbers using the Covering Lemma \ref{lem:covering} and the optimization problem (\ref{eq:lin-opt}) outlined in \cite{Hurlbert2010}. Automating the generation of these strategies through the approach defined above will improve the application of the pebbling optimization problems introduced in \cite{Cranston2015} and \cite{Hurlbert2010}, and have the potential to produce useful pebbling bounds on a wide range of graphs. \\ 
\indent  We propose the following Tree Strategy Optimization problem $\mathbf{TS}_{G,r}$ on the bidirected graph $G' = (V,A)$ derived from the undirected graph of interest $G$. The MILP takes in the number of tree strategies $T$ and maximum tree diameter $\ell$ as input, and outputs a set of $T$ strategies. We use the notation $V$ for the vertex of the graph $G$, and $V(t)$ for the vertex set for tree strategy $t \in [T]$. The MILP is formulated as: 
\begin{subequations}
\begin{align}
    \label{eq:tree-obj}
    \text{Minimize: }&  \sum_{t=1}^T \sum_{i\in V(t)} z_i^t \\
    \label{eq:tree-constr1}
    \text{Subject to: } &  \sum_{j\in V(t) : (j,i) \in A} x_{j,i}^t = y_i^t  &&t = 1,\ldots, T \text{ and } \forall i\in V\\
    \label{eq:tree-constr2}
    & \sum_{i\in V : (r,i)\in A} y_i^t \geq 1 &&t = 1,\ldots,T\\
    \label{eq:tree-constr3}
    & \sum_{t=1}^T z_i^t \geq T &&\forall i\in V : i\neq r \\
    \label{eq:tree-constr4}
    & z_i^t-2z_j^t+2^{\ell}(1-x_{i,j}^t)\geq 0 &&t = 1,\ldots,T \text{ and } \forall (i,j)\in A : i,j \neq r\\ 
    \label{eq:tree-constr5}
    & z_i^t \leq 2^{\ell -1}\cdot y_i^t &&t = 1,\ldots,T\text{ and } \forall i\in V\\ 
    & y_r^t = 0 && t=1,\ldots,T \label{eq:root0}\\ 
    & x_{i,j}^t \in \{0,1\} &&t = 1,\ldots,T \text{ and } \forall (i,j)\in A \\
    & y_{i}^t \in \{0,1\} &&t = 1,\ldots,T \text{ and } \forall i\in V\\
    & z_i^t \in \mathbb{R}_{\geq 0} &&t = 1,\ldots,T \text{ and } \forall i\in V \label{eq:end}
\end{align}
\end{subequations}

The constraints of the MILP enforce structural limitations on the set of tree strategies and ensure that the weight functions satisfy Definition \ref{def:tree-strategy}. Specifically, (\ref{eq:tree-constr1}), (\ref{eq:tree-constr2}) and (\ref{eq:tree-constr4}) work together to ensure each tree is acyclic, connected and rooted at $r$. While the root $r$ is in every tree, we set the corresponding decision variables equal to zero with (\ref{eq:root0}) to ensure the root has no incoming edges. Additionally, (\ref{eq:tree-constr5}) places an upper bound of $2^{\ell-1}$ on each vertex weight. Lastly, (\ref{eq:tree-constr3}) fixes the minimum weight on each vertex to the number of tree strategies $T$, which corresponds to constant $K$ in Lemma \ref{lem:covering}. Therefore, by minimizing the total weight $M$ by objective function, $\mathbf{TS}_{G,r}$ generates tree strategies that minimize the pebbling bound $\pi(G, r) \leq 
\left \lfloor \frac{M}{K} \right\rfloor + 1$.

\subsection{Theoretical Discussion}
We would like to prove that the tree strategies generated by $\mathbf{TS}_{G,r}$ produces useful tree strategies that satisfy Definition \ref{def:tree-strategy}. Along these lines, we present proof that the program outputs valid weight functions. If a tree strategy $\mathcal{T}$ satisfies constraints (\ref{eq:tree-constr1})-(\ref{eq:end}), we say the strategy is \textit{feasible}. We will argue that all feasible strategies satisfy Definition \ref{def:tree-strategy}. \\ 
\indent Additionally, in $\mathbf{TS}_{G,r}$, we seek strategies that are useful in computing low pebbling bounds. Given a graph $G$ rooted at $r$, we calculate an upper bound on $\pi(G, r)$ by the Covering Lemma \ref{lem:covering}, defining constants $K$ and $M$ and bounding the pebbling number by $\pi(G,r) \leq \left\lfloor\frac{M}{K}\right\rfloor + 1$. We fix the minimum vertex weight $K$ to be the number of strategies $T$ by constraint (\ref{eq:tree-constr3}) and minimize the total weight $M$ by the objective function. More specifically, let $\hat{z}$ be the objective value produced by a feasible set of $T$ strategies. We then calculate bounds by 
\begin{equation}
    \pi(G,r) \leq \left\lfloor \frac{M}{K}\right\rfloor + 1 = \left\lfloor\frac{\hat{z}}{T}\right\rfloor + 1 .
    \label{eq:opt}
\end{equation} Now, we offer proof that an arbitrary feasible tree strategy $\mathcal{T}$ produced by $\mathbf{TS}_{G,r}$ satisfies Definition \ref{def:tree-strategy} and thereby corresponds to a valid tree strategy. Recall that the tree strategies produced by $\mathbf{TS}_{G,r}$ have directed edges such that for each $(i,j) \in E(t)$, $\text{dist}(r,i) < \text{dist}(r,j)$. In other words, edges are directed \textit{away} from the root vertex $r$. 
\begin{lemma}
If $\mathcal{T}$ is a feasible tree strategy generated by $\mathbf{TS}_{G,r}$, then $\mathcal{T}$ is acyclic, connected and rooted at $r$.
\label{lem:tree-structure}
\end{lemma}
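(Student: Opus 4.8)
The plan is to verify each of the three claimed structural properties --- rooted at $r$, connected, and acyclic --- directly from the relevant constraints, treating the feasible integer solution $\mathcal{X}, \mathcal{Y}, \mathcal{Z}$ as defining the arc set $E(t) = \{(i,j)\in A : x_{i,j}^t = 1\}$ and vertex set $V(t) = \{r\} \cup \{i\in V : y_i^t = 1\}$ for a fixed strategy $t\in[T]$. First I would handle \emph{rooted at $r$}: constraint (\ref{eq:root0}) forces $y_r^t = 0$, and combined with (\ref{eq:tree-constr1}) for $i = r$ it forces every arc variable $x_{j,r}^t = 0$, so $r$ has in-degree zero in $E(t)$; constraint (\ref{eq:tree-constr2}) guarantees at least one neighbor $i$ of $r$ with $y_i^t = 1$, so $V(t)\setminus\{r\}$ is nonempty and $r$ has positive out-degree. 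The key observation driving the rest is constraint (\ref{eq:tree-constr1}): for every non-root vertex $i$, the number of selected arcs \emph{into} $i$ equals $y_i^t\in\{0,1\}$. So every vertex in $V(t)\setminus\{r\}$ has in-degree exactly one, and every vertex not in $V(t)$ has in-degree zero (and, via (\ref{eq:tree-constr5}), weight zero, though weight is not needed here).

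Next I would establish \emph{acyclicity} using the weight variables as a potential function, which is the slickest route. Suppose $E(t)$ contains a directed cycle $i_0 \to i_1 \to \cdots \to i_{k-1} \to i_0$. None of these vertices is $r$, since $r$ has in-degree zero. Hence constraint (\ref{eq:tree-constr4}) applies to each arc of the cycle, and with $x_{i_s, i_{s+1}}^t = 1$ it reads $z_{i_s}^t \geq 2 z_{i_{s+1}}^t$. Chaining these inequalities around the cycle gives $z_{i_0}^t \geq 2^k z_{i_0}^t$, forcing $z_{i_0}^t = 0$; but every vertex on the cycle has in-degree $\geq 1$, so by (\ref{eq:tree-constr1}) each $y_{i_s}^t = 1$, and then... here is where I have to be slightly careful, because the MILP as written does not directly force $z_i^t > 0$ when $y_i^t = 1$ --- so the weight argument alone may only show that any cycle lies entirely in the zero-weight part of the graph. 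To close this gap cleanly I would instead (or additionally) give a counting argument: restrict to the subgraph on $V(t)$; every vertex of $V(t)\setminus\{r\}$ has in-degree exactly $1$ and $r$ has in-degree $0$, so $|E(t)| = |V(t)| - 1$; a directed graph on $|V(t)|$ vertices with $|V(t)|-1$ arcs in which every vertex has in-degree at most one and which contains a cycle must have a vertex of in-degree zero other than $r$ — but that would be a non-root vertex $i\notin\{r\}$ with $y_i^t = 1$ and yet $\sum_{j} x_{j,i}^t = 0$, contradicting (\ref{eq:tree-constr1}). So $E(t)$ restricted to $V(t)$ is acyclic.

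Finally, for \emph{connectedness} (i.e., every vertex of $V(t)$ is reachable from $r$ along arcs of $E(t)$), I would argue by following in-arcs backwards: take any $i\in V(t)\setminus\{r\}$; it has a unique in-arc from some $j_1$, which must satisfy $y_{j_1}^t = 1$ (since the source of a selected arc into $i$, if equal to $r$ we stop, otherwise by (\ref{eq:tree-constr1}) applied at $j_1$ we would need... actually the source being in $V(t)$ needs a short justification: if $j_1 \neq r$ then $x_{j_1,i}^t = 1$ and constraint (\ref{eq:tree-constr4}) with the weight bound (\ref{eq:tree-constr5}) forces $z_{j_1}^t \geq 2 z_i^t \geq 0$, which is not enough; the clean fix is that we have already shown $|E(t)| = |V(t)|-1$ and no cycles, and a vertex appearing as the tail of an arc but not in $V(t)$ would make $|E(t)| > |V(t)| - 1$ impossible to reconcile — so in fact I will prove as an intermediate claim that every arc of $E(t)$ has both endpoints in $V(t)$, using the out-degree bookkeeping). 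Granting that, following in-arcs backwards from $i$ produces a path $i \leftarrow j_1 \leftarrow j_2 \leftarrow \cdots$ through distinct vertices of the finite acyclic digraph, which must terminate, and the only vertex with in-degree zero is $r$; hence the reversed path is a directed path from $r$ to $i$. Combining the three parts gives that $\mathcal{T}$ is a tree rooted at $r$.

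The main obstacle I anticipate is exactly the subtlety flagged above: the constraints couple the arc variables $x$, vertex variables $y$, and weights $z$ only loosely (via the big-$M$ inequality (\ref{eq:tree-constr4}) and the bound (\ref{eq:tree-constr5})), so I cannot blithely assume $y_i^t = 1 \iff z_i^t > 0$ or that selected arcs automatically have endpoints with $y = 1$. The resolution is to lean on the in-degree constraint (\ref{eq:tree-constr1}) as the workhorse — it pins down in-degrees exactly — and to do careful edge-counting on the subgraph induced by $V(t)$ to force $|E(t)| = |V(t)|-1$, from which acyclicity and the "arcs stay inside $V(t)$" claim both follow; the potential-function argument via (\ref{eq:tree-constr4}) then becomes a clean supplementary proof of acyclicity rather than the load-bearing one. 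Everything else is routine.
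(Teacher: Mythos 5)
You correctly identify the real subtlety --- the constraints never force $z_i^t>0$ for an individual strategy (constraint (\ref{eq:tree-constr3}) only bounds the sum over all $t$), so chaining (\ref{eq:tree-constr4}) around a putative cycle yields $z_{i_1}^t\ge 2^k z_{i_1}^t$, which is a contradiction only when that weight is positive; this is exactly the step the paper's own proof passes over silently. However, the counting argument you substitute as the load-bearing step is not valid. From (\ref{eq:tree-constr1}) every vertex with $y_i^t=1$ has in-degree exactly one and $r$ has in-degree zero, hence $|E(t)|=|V(t)|-1$; but these are precisely the degree conditions of a functional digraph, and they are perfectly compatible with a directed cycle that avoids $r$ and sits in its own weak component. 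Concretely, take adjacent non-root vertices $a,b$ with $a$ adjacent to $r$, set $y_a^t=y_b^t=1$, $x_{a,b}^t=x_{b,a}^t=1$, all other arc variables $0$, and $z_a^t=z_b^t=0$: (\ref{eq:tree-constr1}) holds at every vertex, (\ref{eq:tree-constr2}) holds because it only requires some neighbor of $r$ to carry $y=1$ (it selects no arc out of $r$, so your assertion that $r$ gets positive out-degree from (\ref{eq:tree-constr2}) is also unsupported), (\ref{eq:tree-constr4})--(\ref{eq:tree-constr5}) hold with zero weights, and the shortfall in (\ref{eq:tree-constr3}) can be covered by the other strategies. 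Here every non-root vertex has in-degree one, there is no second in-degree-zero vertex, yet there is a cycle; so ``in-degree at most one plus a cycle implies another in-degree-zero vertex'' is false. The same example undercuts your sketched intermediate claim that tails of selected arcs lie in $V(t)$: the identity $|E(t)|=|V(t)|-1$ counts arcs by their heads and is unaffected by where tails sit.

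For comparison, the paper's proof is your first route: the $z$-chain around a cycle for acyclicity, and a backward walk along unique in-arcs for rootedness and connectivity (your connectivity step is essentially the paper's). Both your repair and the paper's argument ultimately need strict positivity of the weights on the strategy's vertices: with $z_i^t>0$ on $V(t)$ the cycle chain is a genuine contradiction, and (\ref{eq:tree-constr4}) together with (\ref{eq:tree-constr5}) then forces the tail of any selected arc into $V(t)$, after which your backward walk terminates at $r$ as described. Without such a positivity assumption (or redefining the strategy as the support of $z^t$), edge counting cannot substitute for it, because the count only excludes a cycle lying in the same weak component as $r$, and connectivity is precisely what remains to be proved. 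So, as written, your proposal does not close the gap that it correctly diagnoses.
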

\begin{proof}
Let $\mathcal{T} = (V, E, w)$ be an arbitrary feasible tree strategy generated by $\mathbf{TS}_{G,r}$ with index $t$. First, we will show that $\mathcal{T}$ is acyclic by contradiction. Suppose there exists a cycle in $\mathcal{T}$ with vertices $V(\mathcal{T}) = \{i_1, \ldots, i_n\}$ and edges \begin{equation}E(\mathcal{T}) = \{(i_1, i_2), (i_2, i_3), \ldots, (i_{n-1}, i_n), (i_{n}, i_1)\}.\end{equation} Now $y_{i_1}^t = \cdots = y_{i_n}^t = 1$ since all vertices are in $\mathcal{T}$ and $x_{i_1, i_2}^t = \cdots = x_{i_{n-1},i_n}^t = x_{i_{n}, i_1}^t = 1$ by (\ref{eq:tree-constr1}). Fix $i_1$ and walk forward along the cycle. Constraint (\ref{eq:tree-constr4}) says $z_{i_j} \geq 2z_{i_{j+1}}$ for each $j = 1,\ldots, n-1$ and $z_{i_n} \geq 2z_{i_1}$. This implies $z_{i_1} \geq 2^nz_{i_1}$, a contradiction. Hence, $\mathcal{T}$ is acyclic. \\ 
\indent To show $\mathcal{T}$ is rooted at $r$, fix an arbitrary vertex $i \in V(\mathcal{T})$. By (\ref{eq:tree-constr1}), $i$ must have an incoming edge from some vertex $j$ and $r$ must have no incoming edges since $y_r^t = 0$. Walk backwards from $i$ in the opposite direction of the directed arcs. Since there are a finite number of vertices and $r$ has no incoming edges, the walk eventually reaches a vertex with no incoming edges and stops. Since $r$ is the only vertex with no incoming edges that may have an outgoing edge by (\ref{eq:tree-constr1}) and (\ref{eq:tree-constr2}), the backward walk must stop at $r$. Hence, $\mathcal{T}$ is rooted at $r$. By the same argument, $\mathcal{T}$ is also connected. 
\end{proof}

We have shown that feasible tree strategies generated by $\mathbf{TS}_{G,r}$ satisfy the structural conditions of a subtree given in Definition \ref{def:tree-strategy}. Next, we will prove that feasible tree strategies have valid associated weight functions. 
\begin{lemma}  
If $\mathcal{T}$ is a feasible tree strategy generated by $\mathbf{TS}_{G,r}$, then $\mathcal{T}$ has a valid associated weight functions.
\label{lem:feasible-weight}
\end{lemma}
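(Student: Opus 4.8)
The plan is to verify directly that the weight vector $z^t$ attached to a feasible strategy $\mathcal{T}$ with index $t$ satisfies the three defining properties of Definition~\ref{def:tree-strategy}: (1) $w(r)=0$; (2) $w(i^+)\ge 2w(i)$ for every $i\in V(\mathcal{T})\setminus\{r\}$ not adjacent to $r$; and (3) $w(i)=0$ whenever $i\notin V(\mathcal{T})$. The structural facts needed — that $\mathcal{T}$ is a tree rooted at $r$ with edges directed away from $r$, so that each non-root vertex $i\in V(\mathcal{T})$ has a well-defined parent $i^+$ — are already supplied by Lemma~\ref{lem:tree-structure}, so the argument reduces to reading the weight inequalities off the MILP constraints.

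First I would dispatch properties (1) and (3), which follow from the coupling constraint~(\ref{eq:tree-constr5}) together with nonnegativity of the $z$ variables~(\ref{eq:end}). For (1): constraint~(\ref{eq:root0}) forces $y_r^t=0$, so~(\ref{eq:tree-constr5}) gives $z_r^t\le 2^{\ell-1}y_r^t=0$, and with $z_r^t\ge 0$ this yields $w(r)=z_r^t=0$. For (3): if $i\notin V(\mathcal{T})$ then $i\ne r$ and $y_i^t=0$ by definition of the $\mathcal{Y}$ variables, so~(\ref{eq:tree-constr5}) again gives $z_i^t\le 0$, hence $w(i)=0$; this also confirms the weight function is supported on $V(\mathcal{T})$ and takes values in the intended range $[0,2^{\ell-1}]\subset\mathbb{R}_{\ge 0}$.

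The substance is property (2), the weight-halving condition along parent edges. Fix $i\in V(\mathcal{T})\setminus\{r\}$ not adjacent to $r$. By Lemma~\ref{lem:tree-structure}, $\mathcal{T}$ is a tree rooted at $r$, and constraint~(\ref{eq:tree-constr1}) with $y_i^t=1$ forces $i$ to have exactly one incoming arc, say $(i^+,i)$, so $x_{i^+,i}^t=1$ and $i^+$ is precisely the parent of $i$ in $\mathcal{T}$. Since $i$ is not adjacent to $r$, we have $i^+\ne r$, and of course $i\ne r$, so the arc $(i^+,i)$ lies in the index set of constraint~(\ref{eq:tree-constr4}); instantiating that constraint at $(i^+,i)$ and using $x_{i^+,i}^t=1$ makes the term $2^{\ell}(1-x_{i^+,i}^t)$ vanish, leaving $z_{i^+}^t-2z_i^t\ge 0$, i.e.\ $w(i^+)\ge 2w(i)$, as required.

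The one point that needs care — and the closest thing to an obstacle — is lining up the exception ``not adjacent to $r$'' in Definition~\ref{def:tree-strategy}(2) with the domain restriction ``$i,j\ne r$'' on constraint~(\ref{eq:tree-constr4}): I would argue that $i$ being non-adjacent to $r$ in $\mathcal{T}$ is exactly what guarantees its parent $i^+$ is distinct from $r$, so the relevant arc is not excluded from~(\ref{eq:tree-constr4}), while for vertices adjacent to $r$ Definition~\ref{def:tree-strategy} imposes no constraint, so no required inequality is lost. A secondary check is that the constant $2^{\ell}$ genuinely deactivates~(\ref{eq:tree-constr4}) for absent arcs: since $0\le z_j^t\le 2^{\ell-1}$ everywhere by~(\ref{eq:tree-constr5}), the constant is large enough that the constraint is vacuous exactly when $x_{i,j}^t=0$ and tight in the intended sense when $x_{i,j}^t=1$. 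Assembling the three verified properties shows $\mathcal{T}=(V,E,w)$ meets Definition~\ref{def:tree-strategy}, so its associated weight function is valid and, via the Weight Function Lemma~\ref{lem:weight-func}, usable in the Covering Lemma~\ref{lem:covering} bound.
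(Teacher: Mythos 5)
Your proposal is correct and follows essentially the same route as the paper: each of the three conditions of Definition~\ref{def:tree-strategy} is verified directly from constraints (\ref{eq:tree-constr4}), (\ref{eq:tree-constr5}), (\ref{eq:root0}) and the nonnegativity of the $z$ variables. You are in fact slightly more careful than the paper's own proof in aligning the ``not adjacent to $r$'' exception of Definition~\ref{def:tree-strategy} with the restriction $i,j \neq r$ in constraint (\ref{eq:tree-constr4}), a point the paper glosses over when it asserts the halving inequality for all of $V(\mathcal{T})$.
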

\begin{proof}
Let $\mathcal{T} = (V, E, w)$ be an arbitrary feasible tree strategy generated by $\mathbf{TS}_{G,r}$ with index $t$. Let $i \in V(\mathcal{T})$ be an arbitrary vertex in the tree strategy $\mathcal{T}$ with parent $i^+ = j \in V(\mathcal{T})$, such that $(j,i) \in E(\mathcal{T})$. The decision variables are $y_i^t = y_j^t = 1$ and $x_{j,i}^t  = 1$. Let $z_i^t= w(i)$ and $z_j^t = w(j)$, where $0\leq z_i^t \leq 2^{\ell-1}$ and $0\leq z_j^t \leq 2^{\ell-1}$ by (\ref{eq:tree-constr5}). Then, constraint (\ref{eq:tree-constr4}) says  
\begin{equation}
    z_j^t-2z_i^t+2^{\ell}(1-x_{j,i}^t)= z_j^t-2z_i^t \geq 0
\end{equation}
which implies $ z_j^t \geq 2z_i^t$.
Therefore, $w(i) \geq 2w(i^+)$ for all $i\in V(\mathcal{T})$. \\ 
\indent Similarly, we must show that $w(i) = 0$ for all $i\notin V(\mathcal{T})$. Let $i \notin V(\mathcal{T})$ be an arbitrary vertex not in the tree strategy $\mathcal{T}$. By definition, $y_i^t = 0$ since $i\notin V(\mathcal{T})$. Furthermore, $x_{j,i}^t = 0$ and $x_{i,j}^t = 0$ for all $j \in V(G)$ by (\ref{eq:tree-constr1}). By constraint (\ref{eq:tree-constr5}), we have $z_i^t \leq 2^{\ell - 1} y_i^t \Rightarrow z_i^t \leq 0$. Hence, by nonnegativity, $z_i^t = 0$, as desired. Lastly, we must have that $w(r) = 0$. Let $r\in V(G)$ be the specified root vertex. By definition, $y_r^t = 0$, so the same logic implies that $w(r) = 0$. As such, the tree strategies produced by $\mathbf{TS}_{G,r}$ have valid associated weight functions. 
\end{proof}

Combining the results from Lemma \ref{lem:feasible-weight} and Lemma \ref{lem:tree-structure}, we conclude that all feasible tree strategies generated by the linear program satisfy Definition \ref{def:tree-strategy}. More specifically, feasible strategies have valid associated weight functions and are trees rooted at $r$. Lastly, we consider the usefulness of the tree strategies generated from the optimization problem. Our goal is to calculate tight pebbling bounds by leveraging the objective function to produce feasible strategies that drive down the pebbling bound. If $\mathcal{T}^*$ is an optimal solution to $\mathbf{TS}_{G,r}$, with objective value $z^*$, then $\mathcal{T}^*$ is feasible and has the property that $\hat{z}^* \leq \hat{z}$, where $\hat{z}$ is the objective value of any arbitrary feasible solution. Recall that the objective of $\mathbf{TS}_{G,r}$ is to minimize the total weight $M$ across all strategies. Since constraint (\ref{eq:tree-constr3}) fixes $K=T$, we can apply the Covering Lemma \ref{lem:covering} and use Equation (\ref{eq:opt}) to obtain the pebbling lower bound. Therefore, $\mathbf{TS}_{G,r}$ seeks to generate feasible strategies that minimize pebbling bounds.

\subsection{Symmetric Tree Strategy Optimization Problem}
\label{subsec:tree-sym-opt}
The MILP approach to tree strategy generation allows for the exploration of large sets of tree strategies, far more than could reasonably be generated by hand. Increasing the number of tree strategies generated by $\mathbf{TS}_{G,r}$, determined by input parameter $T$, creates more constraints, which increases the computational complexity of the problem. To enable the optimization solver to produce sizeable sets of tree strategies for large graphs efficiently, we introduce a modification to $\mathbf{TS}_{G,r}$ that utilizes the symmetry of Cartesian product graphs, namely the Lemke square $L\Hsquare L$. \\ 
\indent Let $G = (V,E)$ be a graph rooted at $r$ and $G\Hsquare G$ be its Cartesian product defined in Section 2. Consider a set of $T$ tree strategies generated by $\mathbf{TS}_{G\Hsquare G,r}$. By Lemma \ref{lem:weight-func} and Lemma \ref{lem:tree-structure}, each tree strategy is a valid weight function that satisfies Definition \ref{def:tree-strategy}. We introduce the following definition of a symmetric tree strategy to reduce the number of MILP constraints.
\begin{definition}[Symmetric Tree Strategy]
Let $\mathcal{T} = (V, E, w)$ be an arbitrary, valid tree strategy generated by $\mathbf{TS}_{G\Hsquare G,r}$ for some Cartesian product $G\Hsquare G$ with root $r = (r_1, r_2)$. A tree strategy $\mathcal{T}' = (V', E', w')$ with root $r' = (r_2, r_1)$ is said to be symmetric to $\mathcal{T}$ if and only if for all $(v,u) \in V\backslash \{r\}$ and all $((v,u), (x,y))\in E$, we have $(u,v)\in V'\backslash \{r\}$ and $((u,v), (y,x))\in E'$. Furthermore, for all $(v,u)\in V$, $w(v,u) = w'(u,v)$.  
\label{def:sym-tree}
\end{definition}
    
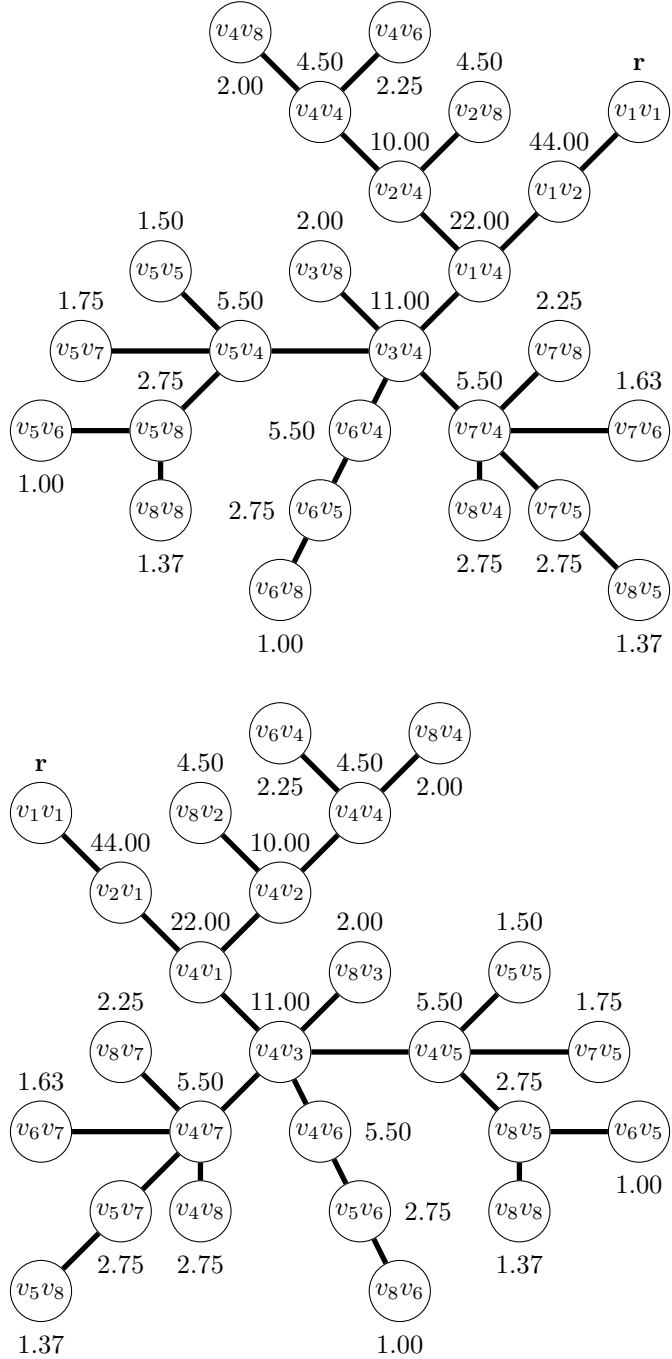
\begin{figure}
\centering
  \begin{tikzpicture}[scale=0.53, auto,swap, vertex/.style={circle,draw,minimum size=7mm,inner sep=0.5mm}]
   
    \foreach \pos/\name/\label in {{(10,4)/v_1v_1/$\textbf{r}$}, {(8,2)/v_1v_2/44.00}, {(6,0)/v_1v_4/22.00}, {(4,2)/v_2v_4/10.00}, {(4,-2)/v_3v_4/11.00},{(2,4)/v_4v_4/4.50},  {(6,4)/v_2v_8/4.50}, {(2,0)/v_3v_8/2.00}, {(0,-2)/v_5v_4/5.50}, {(-2,0)/v_5v_5/1.50}, {(-4,-2)/v_5v_7/1.75}, {(-2,-4)/v_5v_8/2.75},   {(6,-4)/v_7v_4/5.50}, {((8,-2)/v_7v_8/2.25}, {(10,-4)/v_7v_6/1.63}}
        \node[vertex, label={\footnotesize $\label$}] (\name) at \pos {\footnotesize $\name$};

    \foreach \pos/\name/\label in {{(3,-4)/v_6v_4/5.50},{(2,-6)/v_6v_5/2.75}}
         \node[vertex, label={[label distance=]left:{\footnotesize $\label$}}] (\name) at \pos {\footnotesize $\name$};
    \foreach \pos/\name/\label in { {(6,-6)/v_8v_4/2.75}, {(8,-6)/v_7v_5/2.75},{(10,-8)/v_8v_5/1.37},{(-5,-4)/v_5v_6/1.00},{(-2,-6)/v_8v_8/1.37},{(1,-8)/v_6v_8/1.00},{(0,6)/v_4v_8/2.00},{(4,6)/v_4v_6/2.25}}
         \node[vertex, label={[label distance=]below:{\footnotesize $\label$}}] (\name) at \pos {\footnotesize $\name$};

    \foreach \source/ \dest in {{v_1v_1/v_1v_2},{v_1v_2/v_1v_4}, {v_1v_4/v_2v_4}, {v_2v_4/v_4v_4}, {v_4v_4/v_4v_6}, {v_4v_4/v_4v_8},{v_2v_4/v_2v_8}, {v_1v_4/v_3v_4}, {v_3v_4/v_5v_4}, {v_3v_4/v_3v_8}, {v_5v_4/v_5v_5},{v_5v_4/v_5v_7},{v_5v_4/v_5v_8},{v_5v_8/v_5v_6},{v_5v_8/v_8v_8},{v_3v_4/v_6v_4}, {v_6v_4/v_6v_5}, {v_6v_5/v_6v_8},{v_3v_4/v_7v_4},{v_7v_4/v_8v_4},{v_7v_4/v_7v_8},{v_7v_6/v_7v_4},{v_8v_5/v_7v_5}, {v_7v_4/v_7v_5}}
        \path[edge, line width=2pt]  (\source) -- (\dest);
\end{tikzpicture}

\vspace{0.5cm}
  \begin{tikzpicture}[scale=0.53, auto,swap, vertex/.style={circle,draw,minimum size=7mm,inner sep=0.5mm}]
   
    \foreach \pos/\name/\label in {{(-10,4)/v_1v_1/$\textbf{r}$}, {(-8,2)/v_2v_1/44.00}, {(-6,0)/v_4v_1/22.00}, {(-4,2)/v_4v_2/10.00}, {(-4,-2)/v_4v_3/11.00},{(-2,4)/v_4v_4/4.50},   {(-6,4)/v_8v_2/4.50}, {(-2,0)/v_8v_3/2.00}, {(0,-2)/v_4v_5/5.50}, {(2,0)/v_5v_5/1.50}, {(4,-2)/v_7v_5/1.75}, {(2,-4)/v_8v_5/2.75},   {(-6,-4)/v_4v_7/5.50}, {((-8,-2)/v_8v_7/2.25}, {(-10,-4)/v_6v_7/1.63}}
        \node[vertex, label={\footnotesize $\label$}] (\name) at \pos {\footnotesize $\name$};

    \foreach \pos/\name/\label in {{(-3,-4)/v_4v_6/5.50},{(-2,-6)/v_5v_6/2.75}}
         \node[vertex, label={[label distance=]right:{\footnotesize $\label$}}] (\name) at \pos {\footnotesize $\name$};
    \foreach \pos/\name/\label in { {(-6,-6)/v_4v_8/2.75}, {(-8,-6)/v_5v_7/2.75},{(-10,-8)/v_5v_8/1.37},{(5,-4)/v_6v_5/1.00},{(2,-6)/v_8v_8/1.37},{(-1,-8)/v_8v_6/1.00},{(0,6)/v_8v_4/2.00},{(-4,6)/v_6v_4/2.25}}
         \node[vertex, label={[label distance=]below:{\footnotesize $\label$}}] (\name) at \pos {\footnotesize $\name$};

    \foreach \source/ \dest in {{v_1v_1/v_2v_1},{v_2v_1/v_4v_1}, {v_4v_1/v_4v_2}, {v_4v_2/v_4v_4}, {v_4v_4/v_6v_4}, {v_4v_4/v_8v_4},{v_4v_2/v_8v_2}, {v_4v_1/v_4v_3}, {v_4v_3/v_4v_5}, {v_4v_3/v_8v_3}, {v_4v_5/v_5v_5},{v_4v_5/v_7v_5},{v_4v_5/v_8v_5},{v_8v_5/v_6v_5},{v_8v_5/v_8v_8},{v_4v_3/v_4v_6}, {v_4v_6/v_5v_6}, {v_5v_6/v_8v_6},{v_4v_3/v_4v_7},{v_4v_7/v_4v_8},{v_4v_7/v_8v_7},{v_6v_7/v_4v_7},{v_5v_8/v_5v_7}, {v_4v_7/v_5v_7}}
        \path[edge, line width=2pt]  (\source) -- (\dest);
\end{tikzpicture}
 \caption{Example of two symmetric tree strategies generated by $\mathbf{STS}_{G,r}$ on $L\Hsquare L$ with root $r = (v_1, v_1)$.}
 \label{fig:sym-ex}
\end{figure}

Suppose we wish to generate $T$ tree strategies for a graph $G\Hsquare G$, where $T$ is even. Utilizing symmetric pairs of trees, we may generate $T/ 2$ strategies with $\mathbf{TS}_{G\Hsquare G,r}$ and find the corresponding symmetric pair of the trees in this set to generate $T$ total strategies. Evidently, this allows us to generate large amounts of tree strategies at roughly half the computational cost. An example of a symmetric tree strategy pair on $L\Hsquare L$ is shown in Figure \ref{fig:sym-ex}. Before introducing the modified optimization problem that leverages symmetry, we show that the corresponding symmetric tree strategy $\mathcal{T}'$ for a valid tree strategy $\mathcal{T}$ is also valid. 
\begin{lemma}
    If $\mathcal{T}$ is an arbitrary valid tree strategy generated by $\mathbf{TS}_{G\Hsquare G,r}$, then the corresponding symmetric tree $\mathcal{T}'$ is also valid.  
\end{lemma}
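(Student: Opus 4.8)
The plan is to show that $\mathcal{T}'$, as constructed in Definition \ref{def:sym-tree}, satisfies all three properties of Definition \ref{def:tree-strategy}, using the assumption that $\mathcal{T}$ already does so (which holds by Lemmas \ref{lem:tree-structure} and \ref{lem:feasible-weight}). The central observation is that the map $\sigma : (a,b) \mapsto (b,a)$ is a graph automorphism of $G\Hsquare G$ that fixes no vertex structure but swaps the two factors, and that $\mathcal{T}'$ is precisely the image of $\mathcal{T}$ under $\sigma$ with weights transported along $\sigma$. So at heart this is the statement that tree strategies are preserved under graph automorphisms, applied to the specific automorphism $\sigma$.

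First I would verify that $\mathcal{T}'$ is a valid subtree rooted at $r' = (r_2, r_1)$: since $\sigma$ is a bijection on $V(G\Hsquare G)$ and maps edges to edges (because $((v,u),(x,y)) \in E$ forces either $v=x$ with $(u,y)\in E(G)$, or $u=y$ with $(v,x)\in E(G)$, and in either case the swapped pair is an edge), the image of an acyclic connected graph rooted at $r$ is an acyclic connected graph rooted at $\sigma(r) = r'$. This gives the structural half essentially for free. Second, I would check the three weight properties. Property (1): $w'(r') = w'(\sigma(r)) = w(r) = 0$ by the defining relation $w(v,u) = w'(u,v)$ and $w(r)=0$. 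Property (3): if $(u,v) \notin V(\mathcal{T}')$ then $(v,u)\notin V(\mathcal{T})$ (again by the bijection), so $w(v,u) = 0$, hence $w'(u,v) = 0$. Property (2): for a non-root vertex $(u,v)$ of $\mathcal{T}'$ not adjacent to $r'$, its parent in $\mathcal{T}'$ is $(y,x)$ where $(x,y)$ is the parent of $(v,u)$ in $\mathcal{T}$ — here I need that $\sigma$ is a distance-preserving automorphism, so $\text{dist}(r', (u,v)) = \text{dist}(r, (v,u))$ and the parent relation is transported correctly, and also that $(v,u)$ is not adjacent to $r$ iff $(u,v)$ is not adjacent to $r'$. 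Then $w'(\text{parent of }(u,v)) = w'(y,x) = w(x,y) = w(\text{parent of }(v,u)) \geq 2w(v,u) = 2w'(u,v)$, using property (2) for $\mathcal{T}$.

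The one point requiring a little care — and the likely main obstacle — is handling the vertices adjacent to the root cleanly, since Definition \ref{def:tree-strategy} exempts those from the doubling condition (2), and I must make sure the edge set $E'$ and the parent function on $\mathcal{T}'$ are set up so that "adjacent to $r$ in $\mathcal{T}$" corresponds exactly to "adjacent to $r'$ in $\mathcal{T}'$." This follows because $\sigma$ preserves adjacency and $\sigma(r) = r'$, but the bookkeeping with the $i^+$ notation for parents needs to be stated explicitly. A secondary subtlety is confirming that $\sigma$ genuinely preserves distances in $G \Hsquare G$, which follows from the standard fact that the distance in a Cartesian product decomposes as $\text{dist}_{G\Hsquare H}((a,b),(c,d)) = \text{dist}_G(a,c) + \text{dist}_H(b,d)$, so swapping coordinates (here with $G=H$) leaves the sum unchanged. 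Once these two points are pinned down, the remaining verifications are immediate substitutions into the definitions, so I would present the proof as: (i) $\sigma$ is a distance-preserving automorphism of $G\Hsquare G$ with $\sigma(r)=r'$; (ii) therefore $\mathcal{T}' = \sigma(\mathcal{T})$ is a subtree rooted at $r'$; (iii) check properties (1)–(3) of Definition \ref{def:tree-strategy} for $w'$ by transporting along $\sigma$.
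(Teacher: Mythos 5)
Your proof is correct and takes essentially the same route as the paper: both transport the structural properties (rooted, acyclic, connected) and the weight inequalities across the coordinate swap, the paper doing so by a direct cycle-correspondence and substitution argument while you package the same verifications as ``the swap $\sigma$ is an automorphism of $G\Hsquare G$ carrying $\mathcal{T}$ to $\mathcal{T}'$.'' Your explicit treatment of the root-adjacency exemption in Definition \ref{def:tree-strategy} and of distance preservation is a slightly more careful bookkeeping of steps the paper leaves implicit, but it is not a different argument.
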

\begin{proof}
Let $\mathcal{T}= (V, E, w)$ be a valid tree strategy of a graph $G\Hsquare G$ rooted at $r= (r_1, r_2)$. Consider the symmetric tree $\mathcal{T}'= (V', E', w')$ that satisfies Definition \ref{def:sym-tree}. To show that $\mathcal{T}'$ is a valid tree strategy, we must show that $\mathcal{T}'$ is acyclic, connected and rooted at $r' = (r_1, r_2)$, and that $w'$ is a valid weight function. First, by Definition \ref{def:sym-tree}, $\mathcal{T}'$ must be rooted at $r' = (r_1, r_2)$.

Next, suppose that there exists a cycle in $\mathcal{T}'$ and denote this cycle \begin{equation}C' = ((v_k, v_n),\ldots, (v_i,v_j), \ldots, (v_k, v_n))\end{equation} such that $(v_i, v_j) \neq r$ and for all $i,j\geq0$. By the definition of a symmetric tree strategy, there must exist a corresponding cycle \begin{equation}C = ((v_n, v_k),\ldots, (v_j,v_i), \ldots, (v_n, v_k))\end{equation}. Thus, we have reached a contradiction since $\mathcal{T}$ is acyclic by Lemma \ref{lem:tree-structure}, so $\mathcal{T}'$ must also be acyclic. Lastly, the definition of symmetric trees preserves connectivity, therefore if $\mathcal{T}$ is connected, so is $\mathcal{T}'$. \\ 
\indent Lastly, the weight function $w'$ of the symmetric tree $\mathcal{T}'$ must be valid. Recall that a weight function is valid if $w(v) \geq 2w(v^+)$, where $v^+$ is the parent node of $v$, such that $(v^+, v)\in E(\mathcal{T})$. Let $(u,v), (x,y) \in V$ be arbitrary vertices in $\mathcal{T}$ such that $(x,y)$ is the parent node of $(u,v)$, so $((u,v), (x,y)) \in E$. Since $\mathcal{T}$ is valid, $w(u,v) \geq 2w(x,y)$. Now, consider the corresponding symmetric nodes $(v,u), (y,x) \in V'$ and the edge $((v,u), (y,x))\in E'$. By Definition \ref{def:sym-tree}, $w'(v,u) = w(u,v)$ and $w'(y,x) = w(x,y)$. This implies $w'(v,u) \geq 2w'(y,x)$, as desired. Therefore, the validity of the weight function is preserved through symmetry. 
\end{proof} 
Since all tree strategies produced by $\mathbf{TS}_{G\Hsquare G,r}$ have a corresponding symmetric strategy that satisfies Definition \ref{def:sym-tree}, we may generate a set of $T/2$ strategies and use the symmetric pair of each strategy to produce the desired $T$ strategies while preserving computational cost. To generate these strategies, we define a modified version of $\mathbf{TS}_{G\Hsquare G,r}$ to account for the symmetric strategies. Suppose we have a graph $G\Hsquare G$ rooted at $r$ and we would like to find $T$ tree strategies, where $T$ is even. For each $i=(v,u) \in V(G\Hsquare G)$, define $i'= (u,v)\in V(G\Hsquare G)$. The general framework for the Symmetric Tree Strategy Optimization Problem $\mathbf{STS}_{G\Hsquare G,r}$ is then defined as follows: 
\begin{subequations}
\begin{align}
    \label{eq:tree-obj-sym}
    \text{Minimize: }&  \sum_{t=1}^{T/2} \sum_{i\in V(t)} z_i^t + z_{i'}^t \\\label{eq:tree-constr1-sym}
    \text{Subject to: } &  \sum_{j\in V(t) : (j,i) \in A} x_{j,i}^t = y_i^t  && t=1,\ldots,T/2 \text{ and } \forall i\in V(G)\\\label{eq:tree-constr2-sym}
    & \sum_{i\in V(G) : (r,i)\in A} y_i^t \geq 1 && t=1,\ldots,T/2\\ \label{eq:tree-constr3-sym}
    & \sum_{t=1}^{T/2} z_i^t + z_{i'}^t \geq T && \forall i\in V(G) : i\neq r \\\label{eq:tree-constr4-sym}
    & z_i^t-2z_j^t+2^{\ell}(1-x_{i,j}^t)\geq 0 && t=1,\ldots,T/2 \text{ and } \forall (i,j)\in A(G) : i,j \neq r\\ \label{eq:tree-constr5-sym}
    & z_i^t \leq 2^{\ell -1}\cdot y_i^t && t=1,\ldots,T/2 \text{ and } \forall i\in V(G) \\ 
    & y_r^t = 0 && t=1,\ldots,T/2 \label{eq:root0-sym}\\ 
    & x_{i,j}^t \in \{0,1\} &&t=1,\ldots,T/2 \text{ and }  \forall (i,j)\in A(G) \\
    & y_{i}^t \in \{0,1\} &&t=1,\ldots,T/2\text{ and }  \forall i\in V(G)\\
    & z_i^t \in \mathbb{R}_{\geq 0} &&t=1,\ldots,T/2 \text{ and }  \forall i\in V(G)
\end{align}
\end{subequations}
We can then generate a complete set of $T$ tree strategies by taking the corresponding symmetric pair of each strategy produced by $\mathbf{STS}_{G\Hsquare G,r}$. Let $\hat{z}$ be the objective value associated with a feasible set of $T$ strategies. As before, the pebbling bound is given by 
\begin{equation}
    \pi(G\Hsquare G, r) \leq \left\lfloor\frac{\hat{z}}{T}\right\rfloor +1.
\end{equation}
Constraints (\ref{eq:tree-constr1-sym})-(\ref{eq:tree-constr2-sym}) and (\ref{eq:tree-constr4-sym})-(\ref{eq:root0-sym}) those in   $\textbf{TS}_{G\Hsquare G,r}$ to preserve the validity of the tree strategy weight functions. Aside from reducing the number of tree strategies by a factor of 1/2, and modifying the objective function to reflect the symmetric properties of the weight functions generated, we modify constraint (\ref{eq:tree-constr3-sym}). Recall that constraint (\ref{eq:tree-constr3-sym}) serves to encourage diverse tree strategies by ensuring that each vertex weight is bounded below by the total number of strategies produced. In the case of the symmetric strategies, if we produce $T/2$ symmetric strategies, the total number of strategies used to prove the pebbling bound is $T$. Further, by symmetry $z_i^t = z_{i'}^t$, with vertex $i$ appearing in $\mathcal{T}$ and $i'$ appearing in its symmetric counterpart $\mathcal{T}'$. Thus, to ensure $i$ and $i'$ each has weight at least $T/2$ in $\mathcal{T}$ and symmetric strategy $\mathcal{T}'$, respectively, we impose constraint (\ref{eq:tree-constr3-sym}). \\ 
\indent This approach drastically reduces the number of constraints in the MILP. Constraints (\ref{eq:tree-constr1-sym})-(\ref{eq:tree-constr2-sym}) are structural constraints that ensure solutions are trees rooted at $r$, since we are now placing this constraint on $T/2$ trees, we have $T/2$ less constraints than in $\mathbf{TS}_{G\Hsquare G,r}$. Constraints (\ref{eq:tree-constr4-sym}) and (\ref{eq:tree-constr5-sym}) enforce and bound valid weight functions in a given tree. We have $T/2\cdot |V|$ and $T\cdot |E|$ instances of constraints (\ref{eq:tree-constr4-sym}) and (\ref{eq:tree-constr5-sym}), respectively, so we reduce each of these constraints by a factor of $1/2$. Constraints (\ref{eq:tree-constr1-sym})-(\ref{eq:tree-constr2-sym}) and (\ref{eq:tree-constr4-sym})-(\ref{eq:tree-constr5-sym}) enforce Definition \ref{def:tree-strategy}, so they remain unchanged to ensure the tree strategies generated are valid. Lastly, we modify objective function (\ref{eq:tree-obj-sym}) and constraint (\ref{eq:tree-constr3-sym}) accordingly to incorporate the symmetric approach.

\section{Pebbling Results}
\label{sec:results}
The MILP framework above gives many encouraging and impressive results using non-basic tree strategies. As previously mentioned, the code used to implement the optimization problems is available as an open source codebase for further experimentation.\footnote{\url{https://github.com/dominicflocco/Graph\_Pebbling}} While we present notable results in the Appendix; the \texttt{.lp} and \texttt{.log} files, along with certificates and strategy visualizations for all experiments, can also be found in the repository. For the results presented below, we use the popular \texttt{Gurobi}\footnote{\url{https://www.gurobi.com}} mathematical optimization software, yet the code base also implements the \texttt{IBM ILOG CPLEX}\footnote{\url{https://www.ibm.com/analytics/cplex-optimizer}} solver. Neither solver showed significant improvement over the other in experimentation. For each solver, we modify the MILP Focus to find solutions quickly. Experiments were run on a system with an Intel Xeon Processor E5-2620 v4 with 16 cores, each running at 2.1GHz. The system has 128GB of memory and two NUMA nodes.
\subsection{Bound Improvements}
We apply the Tree Strategy Optimization problem to larger graphs in an effort to improve bounds found using manually generated tree strategies in Hurlbert \cite{Hurlbert2010}. Applying linear optimization techniques to automate the generation of tree strategies is particularly useful for large graphs because devising useful strategies for such graphs is often difficult and tedious. Furthermore, the challenge of generating these strategies manually only increases as the number of strategies generated grows; and, large sets of tree strategies may be useful in improve pebbling bounds. The two graphs of interest for this section are the 4\textsuperscript{th} weak Bruhat $B_4$, shown in Figure \ref{fig:bruhat-template2}, and the Lemke square graph $L \Hsquare L$. \\ 
\indent The results on the 4\textsuperscript{th} weak Bruhat graph $B_4$ exhibits that improved pebbling bounds using tree strategies are possible as $T$ is increased. Hurlbert \cite{Hurlbert2010} presents a certificate and visualization that shows $\pi(B_4,r) \leq 72$ using 3 strategies. However, using $\textbf{TS}_{B_4,r}$ with larger $T$ and $\ell$, we obtain the following result.
\begin{figure}
\centering
\begin{tikzpicture}[scale=0.5, auto,swap, vertex/.style={circle,draw,minimum size=6mm,inner sep=0}]
    \foreach \pos/\name/\label in {{(-5,5)/v_1/$r$}, {(5,5)/v_2/}, {(5,-5)/v_4/}, {(-5,-5)/v_3/}, {(-4,4)/v_5/}, {(4,4)/v_8/}, {(-4,-4)/v_{21}/}, {(4,-4)/v_{24}/}, {(-1,-1)/v_{15}/}, {(1,1)/v_{14}/},{(1,-1)/v_{16}/}, {(-1,1)/v_{13}/}, {(-2,4)/v_6/}, {(2,4)/v_7/}, {(-4,2)/v_9/}, {(-4,-2)/v_{17}/}, {(4,2)/v_{12}/}, {(4,-2)/v_{20}/}, {(-2,-4)/v_{22}/}, {(2,-4)/v_{23}/}, {(2,2)/v_{11}/}, {(2,-2)/v_{19}/}, {(-2,-2)/v_{18}/}, {(-2,2)/v_{10}/}}
        \node[vertex, label={\small $\label$}] (\name) at \pos {\small $\name$};
    \foreach \source/ \dest in {{v_1/v_2}, {v_2/v_4}, {v_4/v_3}, {v_3/v_1}, {v_3/v_{21}}, {v_4/v_{24}}, {v_2/v_8}, {v_1/v_5}, {v_5/v_6}, {v_6/v_7}, {v_7/v_8}, {v_8/v_{12}}, {v_{12}/v_{20}}, {v_{20}/v_{24}}, {v_{24}/v_{23}}, {v_{23}/v_{22}}, {v_{22}/v_{21}}, {v_{21}/v_{17}}, {v_{17}/v_9}, {v_9/v_5}, {v_9/v_{10}}, {v_6/v_{10}}, {v_{10}/v_{13}}, {v_7/v_{11}}, {v_{11}/v_{12}}, {v_{23}/v_{19}}, {v_{19}/v_{20}}, {v_{18}/v_{17}},{v_{22}/v_{18}},{v_{18}/v_{15}}, {v_{19}/v_{16}}, {v_{11}/v_{14}}, {v_{13}/v_{15}}, {v_{13}/v_{14}}, {v_{14}/v_{16}}, {v_{16}/v_{15}}}
        \path[edge] (\source) -- (\dest);

\end{tikzpicture}

\caption{4\textsuperscript{th} weak Bruhat Graph $B_4$ on $n=24$ vertices with $r = v_1$.}
\label{fig:bruhat-template2}
\end{figure}
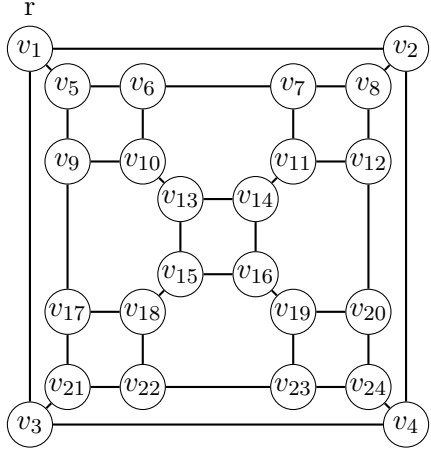
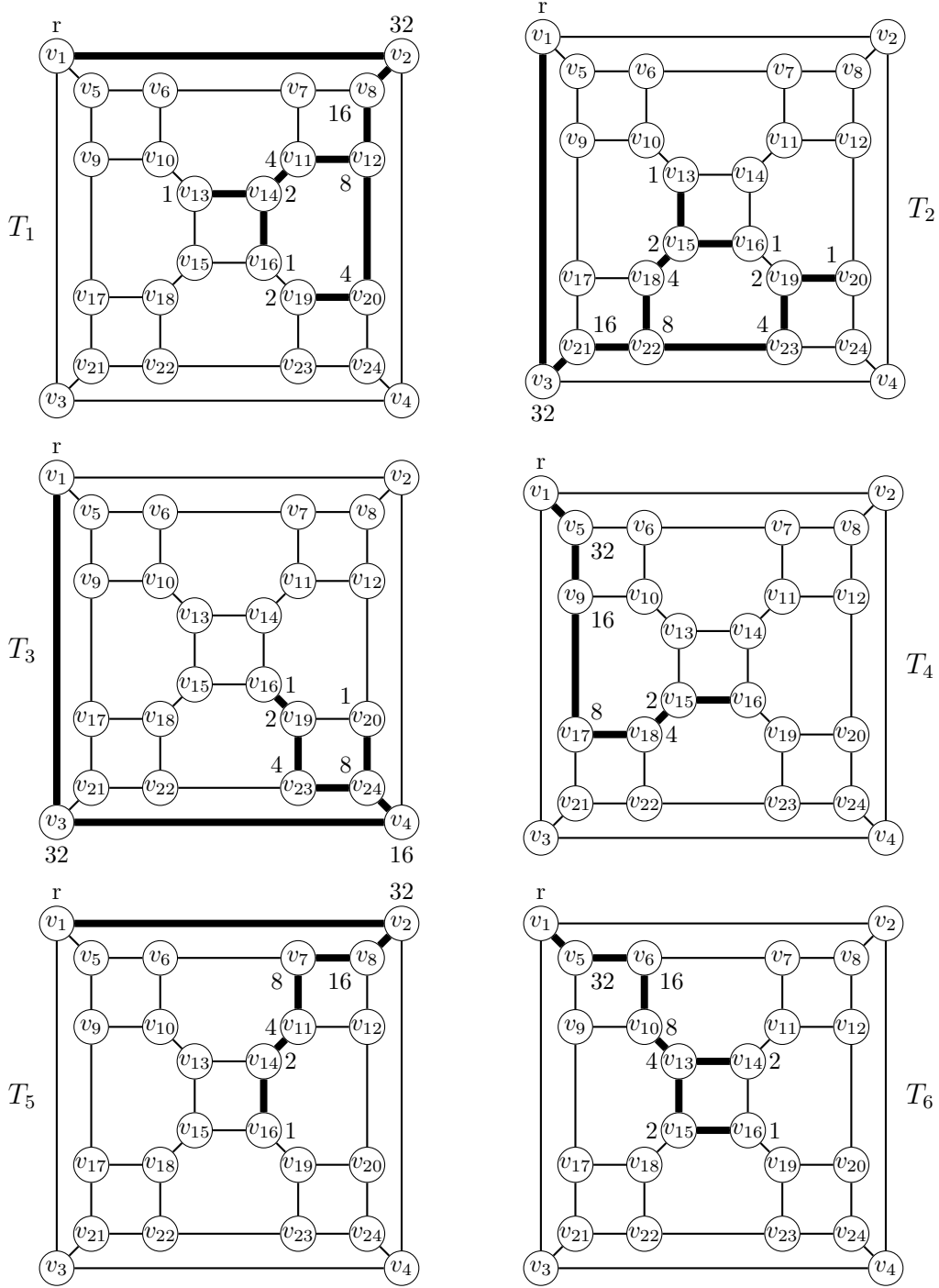
\begin{figure}
\centering
\begin{subfigure}{0.45\textwidth}
\centering
    \begin{tikzpicture}[scale=0.5, auto,swap, vertex/.style={circle,draw,minimum size=5mm,inner sep=0}]
    
    \foreach \pos/\name/\label in 
        {{(-5,5)/v_1/$r$}, {(5,5)/v_2/32}}
        \node[vertex, label={[label distance=-0.5mm]above:{\footnotesize $\label$}}] (\name) at \pos {\footnotesize $\name$};
    \foreach \pos/\name/\label in {{(4,4)/v_8/16},{(4,2)/v_{12}/8}, {(2,4)/v_7/}}
         \node[vertex, label={[label distance=-1.5mm]225:{\footnotesize $\label$}}] (\name) at \pos {\footnotesize $\name$};
    \foreach \pos/\name/\label in {{(-1,1)/v_{13}/1}, {(2,2)/v_{11}/4},{(-1,-1)/v_{15}/},{(2,-2)/v_{19}/2}}
         \node[vertex, label={[label distance=-1mm]left:{\footnotesize $\label$}}] (\name) at \pos {\footnotesize $\name$};
    \foreach \pos/\name/\label in {{(-4,-2)/v_{17}/},{(-4,-4)/v_{21}/},{(-2,-4)/v_{22}/}}
         \node[vertex, label={[label distance=-1.5mm]45:{\footnotesize $\label$}}] (\name) at \pos {\footnotesize $\name$};
    \foreach \pos/\name/\label in {{(1,1)/v_{14}/2},{(-2,2)/v_{10}/},{(-2,-2)/v_{18}/},{(1,-1)/v_{16}/1}}
         \node[vertex, label={[label distance=-1mm]right:{\footnotesize $\label$}}] (\name) at \pos {\footnotesize $\name$};
    \foreach \pos/\name/\label in {{(4,-2)/v_{20}/4},{(2,-4)/v_{23}/},{(4,-4)/v_{24}/}}
         \node[vertex, label={[label distance=-1.5mm]135:{\footnotesize $\label$}}] (\name) at \pos {\footnotesize $\name$};
    \foreach \pos/\name/\label in {{(-2,4)/v_6/},{(-4,2)/v_9/},{(-4,4)/v_5/}}
         \node[vertex, label={[label distance=-1.5mm]315:{\footnotesize $\label$}}] (\name) at \pos {\footnotesize $\name$};
    \foreach \pos/\name/\label in { {(-5,-5)/v_3/},{(5,-5)/v_4/}}
         \node[vertex, label={[label distance=-1mm]below:{\footnotesize $\label$}}] (\name) at \pos {\footnotesize $\name$};
    
    \foreach \source/ \dest in {{v_1/v_2}, {v_2/v_4}, {v_4/v_3}, {v_3/v_1}, {v_3/v_{21}}, {v_4/v_{24}}, {v_2/v_8}, {v_1/v_5}, {v_5/v_6}, {v_6/v_7}, {v_7/v_8}, {v_8/v_{12}}, {v_{12}/v_{20}}, {v_{20}/v_{24}}, {v_{24}/v_{23}}, {v_{23}/v_{22}}, {v_{22}/v_{21}}, {v_{21}/v_{17}}, {v_{17}/v_9}, {v_9/v_5}, {v_9/v_{10}}, {v_6/v_{10}}, {v_{10}/v_{13}}, {v_7/v_{11}}, {v_{11}/v_{12}}, {v_{23}/v_{19}}, {v_{19}/v_{20}}, {v_{18}/v_{17}},{v_{22}/v_{18}},{v_{18}/v_{15}}, {v_{19}/v_{16}}, {v_{11}/v_{14}}, {v_{13}/v_{15}}, {v_{13}/v_{14}}, {v_{14}/v_{16}}, {v_{16}/v_{15}}}
        \path[edge] (\source) -- (\dest);

    \foreach \source/\dest in {{v_1/v_2}, {v_2/v_8}, {v_8/v_{12}}, {v_{12}/v_{20}}, {v_{20}/v_{19}}, {v_{12}/v_{11}}, {v_{11}/v_{14}}, {v_{14}/v_{16}}, {v_{14}/v_{13}}}
        \path[edge, line width=3pt] (\source) -- (\dest);
    \node[] at (-6,0) {$T_1$};
\end{tikzpicture}
\end{subfigure}
\hspace{5mm}
\begin{subfigure}{0.45\textwidth}
\centering
        \begin{tikzpicture}[scale=0.5, auto,swap, vertex/.style={circle,draw,minimum size=5mm,inner sep=0}]
    \foreach \pos/\name/\label in 
        {{(-5,5)/v_1/$r$}, {(5,5)/v_2/}}
        \node[vertex, label={[label distance=-0.5mm]above:{\footnotesize $\label$}}] (\name) at \pos {\footnotesize $\name$};
    \foreach \pos/\name/\label in {{(4,4)/v_8/},{(4,2)/v_{12}/}, {(2,4)/v_7/}}
         \node[vertex, label={[label distance=-1.5mm]225:{\footnotesize $\label$}}] (\name) at \pos {\footnotesize $\name$};
    \foreach \pos/\name/\label in {{(-1,1)/v_{13}/1}, {(2,2)/v_{11}/},{(-1,-1)/v_{15}/2},{(2,-2)/v_{19}/2}}
         \node[vertex, label={[label distance=-1mm]left:{\footnotesize $\label$}}] (\name) at \pos {\footnotesize $\name$};
    \foreach \pos/\name/\label in {{(-4,-2)/v_{17}/},{(-4,-4)/v_{21}/16},{(-2,-4)/v_{22}/8}}
         \node[vertex, label={[label distance=-1.5mm]45:{\footnotesize $\label$}}] (\name) at \pos {\footnotesize $\name$};
    \foreach \pos/\name/\label in {{(1,1)/v_{14}/},{(-2,2)/v_{10}/},{(-2,-2)/v_{18}/4},{(1,-1)/v_{16}/1}}
         \node[vertex, label={[label distance=-1mm]right:{\footnotesize $\label$}}] (\name) at \pos {\footnotesize $\name$};
    \foreach \pos/\name/\label in {{(4,-2)/v_{20}/1},{(2,-4)/v_{23}/4},{(4,-4)/v_{24}/}}
         \node[vertex, label={[label distance=-1.5mm]135:{\footnotesize $\label$}}] (\name) at \pos {\footnotesize $\name$};
    \foreach \pos/\name/\label in {{(-2,4)/v_6/},{(-4,2)/v_9/},{(-4,4)/v_5/}}
         \node[vertex, label={[label distance=-1.5mm]315:{\footnotesize $\label$}}] (\name) at \pos {\footnotesize $\name$};
    \foreach \pos/\name/\label in { {(-5,-5)/v_3/32},{(5,-5)/v_4/}}
         \node[vertex, label={[label distance=-0.5mm]below:{\footnotesize $\label$}}] (\name) at \pos {\footnotesize $\name$};
    \foreach \source/ \dest in {{v_1/v_2}, {v_2/v_4}, {v_4/v_3}, {v_3/v_1}, {v_3/v_{21}}, {v_4/v_{24}}, {v_2/v_8}, {v_1/v_5}, {v_5/v_6}, {v_6/v_7}, {v_7/v_8}, {v_8/v_{12}}, {v_{12}/v_{20}}, {v_{20}/v_{24}}, {v_{24}/v_{23}}, {v_{23}/v_{22}}, {v_{22}/v_{21}}, {v_{21}/v_{17}}, {v_{17}/v_9}, {v_9/v_5}, {v_9/v_{10}}, {v_6/v_{10}}, {v_{10}/v_{13}}, {v_7/v_{11}}, {v_{11}/v_{12}}, {v_{23}/v_{19}}, {v_{19}/v_{20}}, {v_{18}/v_{17}},{v_{22}/v_{18}},{v_{18}/v_{15}}, {v_{19}/v_{16}}, {v_{11}/v_{14}}, {v_{13}/v_{15}}, {v_{13}/v_{14}}, {v_{14}/v_{16}}, {v_{16}/v_{15}}}
        \path[edge] (\source) -- (\dest);
    \foreach \source/\dest in {{v_1/v_3}, {v_3/v_{21}}, {v_{21}/v_{22}},{v_{22}/v_{23}},{v_{23}/v_{19}},{v_{19}/v_{20}},{v_{22}/v_{18}},{v_{18}/v_{15}},{v_{15}/v_{16}},{v_{15}/v_{13}}}
        \path[edge, line width=3pt] (\source) -- (\dest);
        \node[] at (6,0) {$T_2$};
\end{tikzpicture}
\end{subfigure}
\begin{subfigure}{0.45\textwidth}
\centering
        \begin{tikzpicture}[scale=0.5, auto,swap, vertex/.style={circle,draw,minimum size=5mm,inner sep=0}]
    \foreach \pos/\name/\label in 
        {{(-5,5)/v_1/$r$}, {(5,5)/v_2/}}
        \node[vertex, label={[label distance=-0.5mm]above:{\footnotesize $\label$}}] (\name) at \pos {\footnotesize $\name$};
    \foreach \pos/\name/\label in {{(4,4)/v_8/},{(4,2)/v_{12}/}, {(2,4)/v_7/}}
         \node[vertex, label={[label distance=-1.5mm]225:{\footnotesize $\label$}}] (\name) at \pos {\footnotesize $\name$};
    \foreach \pos/\name/\label in {{(-1,1)/v_{13}/}, {(2,2)/v_{11}/},{(-1,-1)/v_{15}/},{(2,-2)/v_{19}/2}}
         \node[vertex, label={[label distance=-1mm]left:{\footnotesize $\label$}}] (\name) at \pos {\footnotesize $\name$};
    \foreach \pos/\name/\label in {{(-4,-2)/v_{17}/},{(-4,-4)/v_{21}/},{(-2,-4)/v_{22}/}}
         \node[vertex, label={[label distance=-1.5mm]45:{\footnotesize $\label$}}] (\name) at \pos {\footnotesize $\name$};
    \foreach \pos/\name/\label in {{(1,1)/v_{14}/},{(-2,2)/v_{10}/},{(-2,-2)/v_{18}/},{(1,-1)/v_{16}/1}}
         \node[vertex, label={[label distance=-1mm]right:{\footnotesize $\label$}}] (\name) at \pos {\footnotesize $\name$};
    \foreach \pos/\name/\label in {{(4,-2)/v_{20}/1},{(2,-4)/v_{23}/4},{(4,-4)/v_{24}/8}}
         \node[vertex, label={[label distance=-1.5mm]135:{\footnotesize $\label$}}] (\name) at \pos {\footnotesize $\name$};
    \foreach \pos/\name/\label in {{(-2,4)/v_6/},{(-4,2)/v_9/},{(-4,4)/v_5/}}
         \node[vertex, label={[label distance=-1.5mm]315:{\footnotesize $\label$}}] (\name) at \pos {\footnotesize $\name$};
    \foreach \pos/\name/\label in { {(-5,-5)/v_3/32},{(5,-5)/v_4/16}}
         \node[vertex, label={[label distance=-0.5mm]below:{\footnotesize $\label$}}] (\name) at \pos {\footnotesize $\name$};
    \foreach \source/ \dest in {{v_1/v_2}, {v_2/v_4}, {v_4/v_3}, {v_3/v_1}, {v_3/v_{21}}, {v_4/v_{24}}, {v_2/v_8}, {v_1/v_5}, {v_5/v_6}, {v_6/v_7}, {v_7/v_8}, {v_8/v_{12}}, {v_{12}/v_{20}}, {v_{20}/v_{24}}, {v_{24}/v_{23}}, {v_{23}/v_{22}}, {v_{22}/v_{21}}, {v_{21}/v_{17}}, {v_{17}/v_9}, {v_9/v_5}, {v_9/v_{10}}, {v_6/v_{10}}, {v_{10}/v_{13}}, {v_7/v_{11}}, {v_{11}/v_{12}}, {v_{23}/v_{19}}, {v_{19}/v_{20}}, {v_{18}/v_{17}},{v_{22}/v_{18}},{v_{18}/v_{15}}, {v_{19}/v_{16}}, {v_{11}/v_{14}}, {v_{13}/v_{15}}, {v_{13}/v_{14}}, {v_{14}/v_{16}}, {v_{16}/v_{15}}}
        \path[edge] (\source) -- (\dest);
    \foreach \source/\dest in {{v_1/v_3},{v_3/v_4},{v_4/v_{24}},{v_{24}/v_{20}},{v_{24}/v_{23}},{v_{23}/v_{19}},{v_{19}/v_{16}}}
        \path[edge, line width=3pt] (\source) -- (\dest);
    \node[] at (-6,0) {$T_3$};
\end{tikzpicture}
\end{subfigure}
\hspace{5mm}
\begin{subfigure}{0.45\textwidth}
\centering
       \begin{tikzpicture}[scale=0.5, auto,swap, vertex/.style={circle,draw,minimum size=5mm,inner sep=0}]
    \foreach \pos/\name/\label in 
        {{(-5,5)/v_1/$r$}, {(5,5)/v_2/}}
        \node[vertex, label={[label distance=-0.5mm]above:{\footnotesize $\label$}}] (\name) at \pos {\footnotesize $\name$};
    \foreach \pos/\name/\label in {{(4,4)/v_8/},{(4,2)/v_{12}/}, {(2,4)/v_7/}}
         \node[vertex, label={[label distance=-1.5mm]225:{\footnotesize $\label$}}] (\name) at \pos {\footnotesize $\name$};
    \foreach \pos/\name/\label in {{(-1,1)/v_{13}/}, {(2,2)/v_{11}/},{(-1,-1)/v_{15}/2},{(2,-2)/v_{19}/}}
         \node[vertex, label={[label distance=-1mm]left:{\footnotesize $\label$}}] (\name) at \pos {\footnotesize $\name$};
    \foreach \pos/\name/\label in {{(-4,-2)/v_{17}/8},{(-4,-4)/v_{21}/},{(-2,-4)/v_{22}/}}
         \node[vertex, label={[label distance=-1.5mm]45:{\footnotesize $\label$}}] (\name) at \pos {\footnotesize $\name$};
    \foreach \pos/\name/\label in {{(1,1)/v_{14}/},{(-2,2)/v_{10}/},{(-2,-2)/v_{18}/4},{(1,-1)/v_{16}/}}
         \node[vertex, label={[label distance=-1mm]right:{\footnotesize $\label$}}] (\name) at \pos {\footnotesize $\name$};
    \foreach \pos/\name/\label in {{(4,-2)/v_{20}/},{(2,-4)/v_{23}/},{(4,-4)/v_{24}/}}
         \node[vertex, label={[label distance=-1.5mm]135:{\footnotesize $\label$}}] (\name) at \pos {\footnotesize $\name$};
    \foreach \pos/\name/\label in {{(-2,4)/v_6/},{(-4,2)/v_9/16},{(-4,4)/v_5/32}}
         \node[vertex, label={[label distance=-1.5mm]315:{\footnotesize $\label$}}] (\name) at \pos {\footnotesize $\name$};
    \foreach \pos/\name/\label in { {(-5,-5)/v_3/},{(5,-5)/v_4/}}
         \node[vertex, label={[label distance=-0.5mm]below:{\footnotesize $\label$}}] (\name) at \pos {\footnotesize $\name$};
    \foreach \source/ \dest in {{v_1/v_2}, {v_2/v_4}, {v_4/v_3}, {v_3/v_1}, {v_3/v_{21}}, {v_4/v_{24}}, {v_2/v_8}, {v_1/v_5}, {v_5/v_6}, {v_6/v_7}, {v_7/v_8}, {v_8/v_{12}}, {v_{12}/v_{20}}, {v_{20}/v_{24}}, {v_{24}/v_{23}}, {v_{23}/v_{22}}, {v_{22}/v_{21}}, {v_{21}/v_{17}}, {v_{17}/v_9}, {v_9/v_5}, {v_9/v_{10}}, {v_6/v_{10}}, {v_{10}/v_{13}}, {v_7/v_{11}}, {v_{11}/v_{12}}, {v_{23}/v_{19}}, {v_{19}/v_{20}}, {v_{18}/v_{17}},{v_{22}/v_{18}},{v_{18}/v_{15}}, {v_{19}/v_{16}}, {v_{11}/v_{14}}, {v_{13}/v_{15}}, {v_{13}/v_{14}}, {v_{14}/v_{16}}, {v_{16}/v_{15}}}
        \path[edge] (\source) -- (\dest);
    \foreach \source/\dest in {{v_1/v_5},{v_5/v_9},{v_9/v_{17}},{v_{17}/v_{18}},{v_{18}/v_{15}},{v_{15}/v_{16}}}
        \path[edge, line width=3pt] (\source) -- (\dest);
        \node[] at (6,0) {$T_4$};
\end{tikzpicture}
\end{subfigure}
\begin{subfigure}{0.45\textwidth}
\centering
        \begin{tikzpicture}[scale=0.5, auto,swap, vertex/.style={circle,draw,minimum size=5mm,inner sep=0}]
    \foreach \pos/\name/\label in 
        {{(-5,5)/v_1/$r$}, {(5,5)/v_2/32}}
        \node[vertex, label={[label distance=-0.5mm]above:{\footnotesize $\label$}}] (\name) at \pos {\footnotesize $\name$};
    \foreach \pos/\name/\label in {{(4,4)/v_8/16},{(4,2)/v_{12}/}, {(2,4)/v_7/8}}
         \node[vertex, label={[label distance=-1.5mm]225:{\footnotesize $\label$}}] (\name) at \pos {\footnotesize $\name$};
    \foreach \pos/\name/\label in {{(-1,1)/v_{13}/}, {(2,2)/v_{11}/4},{(-1,-1)/v_{15}/},{(2,-2)/v_{19}/}}
         \node[vertex, label={[label distance=-1mm]left:{\footnotesize $\label$}}] (\name) at \pos {\footnotesize $\name$};
    \foreach \pos/\name/\label in {{(-4,-2)/v_{17}/},{(-4,-4)/v_{21}/},{(-2,-4)/v_{22}/}}
         \node[vertex, label={[label distance=-1.5mm]45:{\footnotesize $\label$}}] (\name) at \pos {\footnotesize $\name$};
    \foreach \pos/\name/\label in {{(1,1)/v_{14}/2},{(-2,2)/v_{10}/},{(-2,-2)/v_{18}/},{(1,-1)/v_{16}/1}}
         \node[vertex, label={[label distance=-1mm]right:{\footnotesize $\label$}}] (\name) at \pos {\footnotesize $\name$};
    \foreach \pos/\name/\label in {{(4,-2)/v_{20}/},{(2,-4)/v_{23}/},{(4,-4)/v_{24}/}}
         \node[vertex, label={[label distance=-1.5mm]135:{\footnotesize $\label$}}] (\name) at \pos {\footnotesize $\name$};
    \foreach \pos/\name/\label in {{(-2,4)/v_6/},{(-4,2)/v_9/},{(-4,4)/v_5/}}
         \node[vertex, label={[label distance=-1.5mm]315:{\footnotesize $\label$}}] (\name) at \pos {\footnotesize $\name$};
    \foreach \pos/\name/\label in { {(-5,-5)/v_3/},{(5,-5)/v_4/}}
         \node[vertex, label={[label distance=-0.5mm]below:{\footnotesize $\label$}}] (\name) at \pos {\footnotesize $\name$};
    \foreach \source/ \dest in {{v_1/v_2}, {v_2/v_4}, {v_4/v_3}, {v_3/v_1}, {v_3/v_{21}}, {v_4/v_{24}}, {v_2/v_8}, {v_1/v_5}, {v_5/v_6}, {v_6/v_7}, {v_7/v_8}, {v_8/v_{12}}, {v_{12}/v_{20}}, {v_{20}/v_{24}}, {v_{24}/v_{23}}, {v_{23}/v_{22}}, {v_{22}/v_{21}}, {v_{21}/v_{17}}, {v_{17}/v_9}, {v_9/v_5}, {v_9/v_{10}}, {v_6/v_{10}}, {v_{10}/v_{13}}, {v_7/v_{11}}, {v_{11}/v_{12}}, {v_{23}/v_{19}}, {v_{19}/v_{20}}, {v_{18}/v_{17}},{v_{22}/v_{18}},{v_{18}/v_{15}}, {v_{19}/v_{16}}, {v_{11}/v_{14}}, {v_{13}/v_{15}}, {v_{13}/v_{14}}, {v_{14}/v_{16}}, {v_{16}/v_{15}}}
        \path[edge] (\source) -- (\dest);
    \foreach \source/\dest in {{v_1/v_2},{v_2/v_8},{v_8/v_7},{v_7/v_{11}},{v_{11}/v_{14}},{v_{14}/v_{16}}}
        \path[edge, line width=3pt] (\source) -- (\dest);
    \node[] at (-6,0) {$T_5$};
\end{tikzpicture}
\end{subfigure}
\hspace{5mm}
\begin{subfigure}{0.45\textwidth}
\centering
       \begin{tikzpicture}[scale=0.5, auto,swap, vertex/.style={circle,draw,minimum size=5mm,inner sep=0}]
   \foreach \pos/\name/\label in 
        {{(-5,5)/v_1/$r$}, {(5,5)/v_2/}}
        \node[vertex, label={[label distance=-0.5mm]above:{\footnotesize $\label$}}] (\name) at \pos {\footnotesize $\name$};
    \foreach \pos/\name/\label in {{(4,4)/v_8/},{(4,2)/v_{12}/}, {(2,4)/v_7/}}
         \node[vertex, label={[label distance=-1.5mm]225:{\footnotesize $\label$}}] (\name) at \pos {\footnotesize $\name$};
    \foreach \pos/\name/\label in {{(-1,1)/v_{13}/4}, {(2,2)/v_{11}/},{(-1,-1)/v_{15}/2},{(2,-2)/v_{19}/}}
         \node[vertex, label={[label distance=-1mm]left:{\footnotesize $\label$}}] (\name) at \pos {\footnotesize $\name$};
    \foreach \pos/\name/\label in {{(-4,-2)/v_{17}/},{(-4,-4)/v_{21}/},{(-2,-4)/v_{22}/}}
         \node[vertex, label={[label distance=-1.5mm]45:{\footnotesize $\label$}}] (\name) at \pos {\footnotesize $\name$};
    \foreach \pos/\name/\label in {{(1,1)/v_{14}/2},{(-2,2)/v_{10}/8},{(-2,-2)/v_{18}/},{(1,-1)/v_{16}/1}}
         \node[vertex, label={[label distance=-1mm]right:{\footnotesize $\label$}}] (\name) at \pos {\footnotesize $\name$};
    \foreach \pos/\name/\label in {{(4,-2)/v_{20}/},{(2,-4)/v_{23}/},{(4,-4)/v_{24}/}}
         \node[vertex, label={[label distance=-1.5mm]135:{\footnotesize $\label$}}] (\name) at \pos {\footnotesize $\name$};
    \foreach \pos/\name/\label in {{(-2,4)/v_6/16},{(-4,2)/v_9/},{(-4,4)/v_5/32}}
         \node[vertex, label={[label distance=-1.5mm]315:{\footnotesize $\label$}}] (\name) at \pos {\footnotesize $\name$};
    \foreach \pos/\name/\label in { {(-5,-5)/v_3/},{(5,-5)/v_4/}}
         \node[vertex, label={[label distance=-0.5mm]below:{\footnotesize $\label$}}] (\name) at \pos {\footnotesize $\name$};
    \foreach \source/ \dest in {{v_1/v_2}, {v_2/v_4}, {v_4/v_3}, {v_3/v_1}, {v_3/v_{21}}, {v_4/v_{24}}, {v_2/v_8}, {v_1/v_5}, {v_5/v_6}, {v_6/v_7}, {v_7/v_8}, {v_8/v_{12}}, {v_{12}/v_{20}}, {v_{20}/v_{24}}, {v_{24}/v_{23}}, {v_{23}/v_{22}}, {v_{22}/v_{21}}, {v_{21}/v_{17}}, {v_{17}/v_9}, {v_9/v_5}, {v_9/v_{10}}, {v_6/v_{10}}, {v_{10}/v_{13}}, {v_7/v_{11}}, {v_{11}/v_{12}}, {v_{23}/v_{19}}, {v_{19}/v_{20}}, {v_{18}/v_{17}},{v_{22}/v_{18}},{v_{18}/v_{15}}, {v_{19}/v_{16}}, {v_{11}/v_{14}}, {v_{13}/v_{15}}, {v_{13}/v_{14}}, {v_{14}/v_{16}}, {v_{16}/v_{15}}}
        \path[edge] (\source) -- (\dest);
    \foreach \source/\dest in {{v_1/v_5},{v_5/v_6},{v_6/v_{10}},{v_{10}/v_{13}},{v_{13}/v_{14}},{v_{13}/v_{15}},{v_{15}/v_{16}}}
        \path[edge, line width=3pt] (\source) -- (\dest);
    \node[] at (6,0) {$T_6$};
\end{tikzpicture}
\end{subfigure}
    \caption{Set of tree strategies $\mathcal{T}_r = \{T_1, T_2, T_3, T_4, T_5, T_6\}$ that prove $\pi(B_4) \leq 66$.}
    \label{fig:bruhat-cert}
\end{figure}

\begin{theorem}
Let $B_4$ be the Bruhat graph of order $4$. Then $\pi(B_4) \leq 66$.
\end{theorem}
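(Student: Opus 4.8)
The plan is to produce an explicit certificate: six valid tree strategies rooted at a single vertex whose conic combination, fed into the Covering Lemma \ref{lem:covering}, forces the bound $66$. I would first reduce the computation of $\pi(B_4)$ to a single rooted pebbling number. The graph $B_4$ is the Hasse diagram of the weak order on $S_4$, which coincides with the Cayley graph of $S_4$ generated by the three adjacent transpositions, so $B_4$ is vertex-transitive; hence $\pi(B_4,r)$ is independent of $r$ and $\pi(B_4) = \pi(B_4, v_1)$. It therefore suffices to show $\pi(B_4, v_1) \le 66$.

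Next I would take the six labeled trees $T_1, \dots, T_6$ of Figure \ref{fig:bruhat-cert}, reading their vertex labels as weight functions $w_1, \dots, w_6$ (with weight $0$ on $v_1$ and on unlabeled vertices). Each $T_i$ is a feasible output of $\mathbf{TS}_{B_4, v_1}$ with parameter $T = 6$, so Lemma \ref{lem:tree-structure} shows it is acyclic, connected and rooted at $v_1$, and Lemma \ref{lem:feasible-weight} shows $w_i$ satisfies Definition \ref{def:tree-strategy}; equivalently, one checks each picture directly, verifying $w_i(v_1) = 0$, the halving inequality $w_i(j^+) \ge 2\,w_i(j)$ across every edge of $T_i$ whose endpoints avoid the root, and that $w_i$ is supported on $V(T_i)$. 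Thus every $w_i$ is a valid tree-strategy weight function for $B_4$ rooted at $v_1$, and Lemma \ref{lem:weight-func} applies to each.

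I would then set $w' = \sum_{i=1}^{6} w_i$, a conic combination of tree strategies for $v_1$, and let $K = \min_{v \ne v_1} w'(v)$ as in (\ref{eq:K}). Constraint (\ref{eq:tree-constr3}) of $\mathbf{TS}_{B_4,v_1}$ forces $w'(v) \ge T = 6$ for every $v \ne v_1$, and tabulating the six labelings shows this minimum equals $6$. Writing $\hat z = \sum_{v \ne v_1} w'(v)$ for the total weight over the six trees and taking $\epsilon = 1$, i.e. $M = \hat z + 1$, the tally of the labels gives $\lfloor M/K \rfloor = 65$. The Covering Lemma \ref{lem:covering} with these $K$ and $M$ (equivalently Equation (\ref{eq:opt}) with $T = 6$, or the linear program (\ref{eq:lin-opt}) restricted to $w_1,\dots,w_6$) then gives $\pi(B_4, v_1) \le \lfloor M/K \rfloor + 1 = 66$, and by vertex-transitivity $\pi(B_4) \le 66$.

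The step I expect to be the main obstacle is the verification itself: one must confirm that each of the six labelings respects the halving inequality on all of its (at most $23$) edges, that the componentwise sum $w'$ has minimum exactly $6$, and that its total $\hat z$ satisfies $\lfloor (\hat z+1)/6 \rfloor = 65$. This is a finite but tedious computation, and automating it is precisely the purpose of $\mathbf{TS}_{B_4, v_1}$; the six trees of Figure \ref{fig:bruhat-cert} constitute the hand-checkable certificate it produces. A secondary point needing a short argument is the reduction to a single root, which relies only on the vertex-transitivity of $B_4$.
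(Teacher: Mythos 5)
Your proposal matches the paper's proof: the paper likewise certifies the bound by the six tree strategies of Figure \ref{fig:bruhat-cert} rooted at $v_1$ (validated via Definition \ref{def:tree-strategy} and the Covering Lemma \ref{lem:covering} with $K=T=6$) and invokes vertex-transitivity of $B_4$ to reduce to the single root. You simply spell out the verification steps that the paper leaves to the certificate, so the approach is essentially identical.
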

\begin{proof}
The $T= 6$ strategies with $r = v_1$ shown in Figure \ref{fig:bruhat-cert} certify the result. Since the graph is vertex transitive, only one root must be checked. 
\end{proof}
The bound was obtained on 16 threads with a walltime of 3 hours and maximum tree length set to $\ell = 16$. This is a significant improvement on the results presented in \cite{Hurlbert2010} and exhibits the usefulness and limitations of using linear optimization to generate tree strategies. Further, it raises the following conjecture. 
\begin{conjecture}
If $B_4$ is the 4\textsuperscript{th} weak Bruhat graph, then the bound $\pi(B_4) \leq 66$ is best possible using tree strategy weight functions. 
\end{conjecture}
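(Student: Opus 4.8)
The plan is to read ``best possible using tree strategy weight functions'' as the statement that the optimal value of the integer program (\ref{eq:lin-opt}), taken over the \emph{entire} cone of tree strategies of $B_4$ rooted at $r = v_1$, equals $65$, so that the resulting bound is exactly $z^\ast + 1 = 66$ and cannot be lowered by any enlargement of the strategy set. Since $B_4$ is vertex-transitive it suffices to fix the single root $r = v_1$. Write
\begin{equation}
z^\ast = \max\Bigl\{ \textstyle\sum_{v \in V \setminus \{r\}} C(v) \ : \ C \in \mathbb{Z}_{\geq 0}^{n},\ w \cdot C \le w \cdot \mathbf{1}_{B_4} \text{ for every tree strategy } w \Bigr\}.
\end{equation}
One direction is already supplied by the certificate: the six strategies of Figure \ref{fig:bruhat-cert} form a feasible restriction of (\ref{eq:lin-opt}), and adjoining the remaining (exponentially many) tree-strategy constraints only shrinks the feasible region, so $z^\ast \le 65$ and the method certainly attains $66$. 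The genuine content of the conjecture is the reverse inequality $z^\ast \ge 65$.

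To establish $z^\ast \ge 65$ I would exhibit a single configuration $C^\ast$ with $\sum_{v \ne r} C^\ast(v) = 65$ that is feasible against \emph{every} tree strategy; any such configuration is a lower bound for the maximization defining $z^\ast$. The delicate feature is that, because Cranston et al. \cite{Cranston2015} prove $\pi(B_4) \le 64$ using non-tree weight functions, this $C^\ast$ is necessarily $r$-\emph{solvable}: it exposes the intrinsic weakness of tree strategies rather than being a genuine unsolvable configuration, so it cannot be produced by the usual ``largest unsolvable configuration'' heuristics and must be engineered to defeat all subtrees simultaneously. Feasibility of a \emph{fixed} $C^\ast$ is nonetheless a finite, hand-checkable question, since it is equivalent to showing that the separation problem
\begin{equation}
\max_{w} \ w \cdot (C^\ast - \mathbf{1}_{B_4}) \le 0,
\end{equation}
where $w$ ranges over valid tree-strategy weight vectors, has nonpositive optimum. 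This maximization uses exactly the structural and bounding constraints (\ref{eq:tree-constr1})--(\ref{eq:root0}) of $\mathbf{TS}_{B_4,r}$, but with $C^\ast$ fixed and the weights treated as the variables, so it is one MILP whose optimal dual bound certifies that $C^\ast$ violates no tree strategy.

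Operationally I would compute $z^\ast$ exactly by row generation: solve (\ref{eq:lin-opt}) over a growing pool of strategies, use the separation MILP above to find a most-violated tree strategy for the current optimal $C$, adjoin it, and iterate. Because the set of tree strategies is a finite union of polyhedral cones (one per rooted subtree), its conic hull has finitely many extreme rays, so this process terminates and returns simultaneously an optimal configuration of value $z^\ast$ together with the conic combination of strategies certifying it, linked by complementary slackness. Proving that the terminal value is exactly $65$ then settles the conjecture. The main obstacle I anticipate lies in the second step: producing the tree-feasible configuration $C^\ast$ of size $65$ and certifying its feasibility against all subtrees, or equivalently guaranteeing that row generation halts at value $65$ rather than merely confirming the $z^\ast \le 65$ already in hand. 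This is precisely the gap between the tree-strategy relaxation ($66$) and the true pebbling number ($\le 64$), and closing it from below will require exploiting the combinatorial structure of $B_4$---its vertex-transitivity and the geometry of its root-to-antipode paths---to rule out any single tree strategy capable of separating a size-$65$ configuration.
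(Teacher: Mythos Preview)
This statement is a \emph{conjecture} in the paper, not a theorem: the authors give no proof, only the remark immediately following it that running $\mathbf{TS}_{B_4,r}$ with large values of $T$ and exhausting their computational resources never produced a bound below $66$. There is thus no paper proof to compare against.

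Your proposal is a coherent and correct outline of what a proof would require, and it is considerably more rigorous than the paper's purely empirical justification. Recasting the claim as $z^\ast = 65$ for the integer program over the full cone of tree strategies is the right target; the row-generation scheme with a MILP separation oracle is a valid finite procedure (termination follows already from the finiteness of the integer feasible region once the six certificate strategies are loaded); and your observation that any witness $C^\ast$ of size $65$ must be $r$-solvable---since Cranston et al.\ \cite{Cranston2015} give $\pi(B_4)\le 64$---correctly pinpoints why the lower-bound direction is the substantive one and cannot be settled by exhibiting an unsolvable configuration.

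The genuine gap is that the plan is not executed: you do not exhibit $C^\ast$, nor do you run the row-generation loop to termination and report its value, and you explicitly flag this as the outstanding obstacle. Producing that witness and certifying it against every subtree is precisely the content of the conjecture, so until that step is carried out the statement remains open---in your write-up exactly as in the paper.
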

We arrive at this conjecture after running $\mathbf{TS}_{B_4, r}$ with high values of $T$ and exhausting available computational resources. The strength of the proposed MILP approach is in its versatility and ability to generate a large number of tree strategy weight functions. We believe the bound off 66 is the best attainable through these means, not due to a restriction in the MILP approach but a limitation in the usefulness of tree strategy weight functions. We believe more general weight functions, such as those defined in \cite{Cranston2015}, have greater potential to improve the bound on $B_4$.

\subsection{Improving Results with Symmetry}  
\indent The Lemke square is of particular interest to pebbling researchers since it is a candidate for a counterexample to Graham's Conjecture \ref{conj:graham}. Due to the size and complexity of the Lemke square, generating tree strategies by hand is difficult and has many limitations. Hurlbert \cite{Hurlbert2010} used manually generated certificates to show that \begin{equation}\pi(L\Hsquare L, (r,r)) \leq \begin{cases} 108 &\text{ if } r = v_1 \\ 96 & \text{ if } r = v_8 \\ 68 &\text{ if } r = v_4,
\end{cases}\end{equation} using 4 strategies for $r=v_4$, 4 strategies for $r = v_8$ and 5 strategies for $r = v_1$. Furthermore, Kenter \cite{Kenter2019} uses an integer program to provide strong evidence that $\pi(L\Hsquare L) \leq 85$. Our goal is to use linear optimization to generate larger sets of strategies in hopes of improving these bounds and to provide certificates that serve to validate the proof. We apply both $\textbf{TS}_{G,r}$ and $\textbf{STS}_{G,r}$ to the Lemke square graph in this pursuit. In addition to successfully improving the bounds of Hurlbert \cite{Hurlbert2010}, the experimentation also provides useful insights into the computational properties of the linear programs. \\
\begin{table}[tb]
    \centering
    \begin{tabular}{|c|c|c|c|c|}\hline
         $T$ & $\ell$ & Walltime (sec)& Solver & Threads   \\\hline
         10 & 16 & 43200 & \texttt{Gurobi} & 16 \\\hline
    \end{tabular}
    \caption{Experimental parameters for $\textbf{STS}_{L\Hsquare L,r}$  with root $r = (v_1, v_1)$ that produced bound $\pi(L\Hsquare L, (v_1, v_1)) \leq 96$. }
    \label{tab:ls-params}
\end{table}
\begin{table}[tb]
    \centering
    \begin{tabular}{|c|c|c|c|c|}\hline
         $T$ & $\ell$ & Walltime (sec)& Solver & Threads   \\\hline
         8 & 16 & 10800 & \texttt{Gurobi} & 16 \\\hline
    \end{tabular}
    \caption{Experimental parameters for $\textbf{STS}_{L\Hsquare L,r}$ with root $r \neq (v_1, v_1)$ that produced bound $\pi(L\Hsquare L, r) \leq 96$ for all $r\in V(L\Hsquare L)$. }
    \label{tab:ls-all-params}
\end{table}
\indent Due to the size and complexity of the Lemke square, solving the MILPs above is computationally expensive. Given a graph with $n$ vertices and $m$ edges, generating a set of $T$ strategies involves $T(2n + m)$ total decision variables, $Tn$ of which are continuous and $T(n + m)$ of which are binary, and $T(2m +2n+1)+n$ constraints. As such, leveraging the symmetry of the Lemke square in $\mathbf{STS}_{L\Hsquare L,r}$ to reduce the number of decision variables and constraints yielded the lowest pebbling bound. \\
\indent After extensive experimentation, the set of parameters summarized in Tables \ref{tab:ls-params} and \ref{tab:ls-all-params} produced the solution closest to optimal. Proximity to optimality was measured by the optimality gap of the MILP solver, which ranged from $0.0\%$ for $r = (v_2, v_4)$ to $33.8\%$ for $r = (v_1,v_1)$. \footnote{The optimality gap of a MILP is the magnitude of the difference between the best integer solution (e.g. the incumbent solution) and best bound, divided by the best bound. Specifically, if $z_P$ is the incumbent objective value, and $z_D$ is the lower bound of the objective, the MILP gap is $|z_P - z_D|/|z_P|$. The best objective found for bounding $\pi(L\Hsquare L)$ was $63.0$ for each root.} As noted by Hurlbert \cite{Hurlbert2010}, the root $r = (v_1, v_1)$ proves to be the most difficult root for which to obtain low upper bound due to its low degree and location in the graph. The results in Table \ref{tab:ls-bounds} indicate that $(v_1, v_1)$ restricts the upper bound for all roots most tightly, as all other roots achieved a bound significantly lower than 96. Using $T= 10$ symmetric tree strategies (i.e. 20 total certificates) we prove that $\pi(L\Hsquare L, (v_1, v_1)) \leq 96$, and show that this bound holds for all other roots $r \neq (v_1, v_1)$ with $T= 6$ symmetric strategies. 
\begin{theorem}
Let $L$ be the Lemke graph. Then $\pi(L\Hsquare L, (r_1,r_2)) \leq 96$, for all $(r_1,r_2)\in L\Hsquare L$.
\end{theorem}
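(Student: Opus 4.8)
The plan is to prove the bound one root at a time, producing for each root $r = (r_1, r_2)$ a feasible solution of the Symmetric Tree Strategy Optimization Problem $\mathbf{STS}_{L\Hsquare L, r}$ and feeding it into the Covering Lemma \ref{lem:covering}. Concretely, fix a root and let $\hat z$ be the objective value of a feasible solution of $\mathbf{STS}_{L\Hsquare L, r}$ together with its completion (via symmetric pairs) to a collection of $T$ tree strategies. By Lemma \ref{lem:tree-structure}, Lemma \ref{lem:feasible-weight}, and the lemma guaranteeing that the symmetric completion of a valid tree strategy is again valid, each of these $T$ strategies satisfies Definition \ref{def:tree-strategy}; their sum is therefore a conic combination $w'$ of tree strategies for $r$ with $w'(v) \ge T$ for every $v \neq r$ by constraint (\ref{eq:tree-constr3-sym}) and $\sum_{v\neq r} w'(v) = \hat z$. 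Applying Lemma \ref{lem:covering} with $K = T$ and $M = \hat z + 1$ yields $\pi(L\Hsquare L, r) \le \lfloor \hat z/T\rfloor + 1$, so it suffices to exhibit, for every root, a feasible solution with $\hat z < 96\,T$.

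Unlike $B_4$, the Lemke graph $L$ is not vertex-transitive (Figure \ref{fig:lemke-template}), so a single root will not do. I would instead use the automorphisms of $L\Hsquare L$ to reduce the workload: the coordinate transposition $(a,b)\mapsto(b,a)$ together with the copy of $S_3$ permuting the three degree-three vertices $\{v_5,v_6,v_7\}$ in each factor leaves $\pi(L\Hsquare L, r)$ constant on orbits, so only one representative per orbit of ordered pairs must be checked. For each representative I would run $\mathbf{STS}_{L\Hsquare L, r}$ with parameters as in Tables \ref{tab:ls-params} and \ref{tab:ls-all-params} (maximum tree depth $\ell = 16$) and report the resulting weight functions as certificates, collected in the appendix and repository. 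For every root other than $(v_1,v_1)$ the smaller run of Table \ref{tab:ls-all-params} already certifies $\pi(L\Hsquare L, r) \le 96$ with room to spare; the root $(v_1,v_1)$ --- of minimum degree and furthest from the bulk of the graph --- is the binding case and needs the heavier run of Table \ref{tab:ls-params} ($T = 10$ symmetric strategies, $20$ trees in all), which lands exactly at $\lfloor \hat z/20\rfloor + 1 = 96$. Since each certificate is an explicit finite list of nonnegative weight vectors on the $64$ vertices of $L\Hsquare L$, one can re-verify Definition \ref{def:tree-strategy} and recompute $\hat z$ by hand, so the conclusion never depends on trusting the solver.

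The hard part will be entirely computational. The model $\mathbf{TS}_{L\Hsquare L, r}$ has on the order of $T(2n+m)$ variables and $T(2m+2n+1)+n$ constraints with $n = 64$ and $m = 208$, far too large to close to provable optimality within a practical walltime; indeed the optimality gap for $(v_1,v_1)$ remained at $33.8\%$ after three hours. This gap does not threaten correctness, since Lemma \ref{lem:covering} applies verbatim to \emph{any} feasible point, but it does mean the proof hinges on coaxing the solver --- via the symmetry reduction that defines $\mathbf{STS}$, via tuning the MILP focus toward quickly locating feasible incumbents, and via a generous depth bound $\ell$ --- into returning an incumbent with $\hat z < 96\,T$ before time expires. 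The size of the gap also indicates that the true optimum is considerably smaller, the best dual bound corresponding to a per-tree average near $64$, i.e.\ close to the Graham product bound $\pi(L)^2 = 64$ of Conjecture \ref{conj:graham}; reaching that value rather than $96$ would require either substantially more computation or more expressive (non-tree) weight functions.
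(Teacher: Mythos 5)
Your proposal is essentially the paper's own argument: per-root certificates produced by $\mathbf{STS}_{L\Hsquare L,r}$ with the parameters of Tables \ref{tab:ls-params} and \ref{tab:ls-all-params}, validated through Lemmas \ref{lem:tree-structure}--\ref{lem:feasible-weight} and the symmetric-strategy lemma, combined via the Covering Lemma \ref{lem:covering} with $K=T$, and with $(v_1,v_1)$ as the binding case requiring the $T=10$ symmetric (20 total) strategies listed in the appendix. The only cosmetic difference is the symmetry reduction: the paper uses just the coordinate transposition $\pi(L\Hsquare L,(r_1,r_2))=\pi(L\Hsquare L,(r_2,r_1))$ and checks the roots of Table \ref{tab:ls-bounds}, whereas you additionally invoke the (valid) $S_3$ automorphisms of $L$ permuting $v_5,v_6,v_7$ to shrink the set of representatives further.
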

\begin{proof}
See \ref{subsec:app-cert} for certificates and \ref{subsec:app-viz} for the visualizations of each tree strategy with accompanying weight function for $r = (v_1, v_1)$. Certificates and visualizations for all $r \in V(L\Hsquare L) \backslash \{(v_1, v_1)\}$ are presented in the accompaning repository,\footnote{\url{https://github.com/dominicflocco/Graph_Pebbling}} and the bounds obtained for these vertices are presented in Table \ref{tab:ls-bounds}. The results were obtained using the parameters summarized in Table \ref{tab:ls-params} for $r = (v_1, v_1)$ and in Table \ref{tab:ls-all-params} for $r \neq (v_1, v_1)$. Without loss of generality, we compute bounds for the set of 32 roots presented in Table \ref{tab:ls-bounds}. By the symmetry of $L\Hsquare L$, $\pi(L\Hsquare L, (r_1, r_2)) = \pi(L\Hsquare L, (r_2, r_1))$, which provides bounds for the remaining 32 vertices. 
\end{proof}

\begin{table}[tb]
\centering
 \begin{threeparttable}[t]
    \centering
    \begin{tabular}{|c|c||c|c||c|c|}\hline
    $r$ & $\pi(L\Hsquare L, r) \leq$ &$r$ & $\pi(L\Hsquare L, r) \leq$  &$r$ & $\pi(L\Hsquare L, r) \leq$  \\\hline\hline
    
$(v_8, v_5)$&72&$(v_2, v_7)$&65&$(v_3, v_7)$&65\\\hline
$(v_5, v_2)$&65&$(v_1, v_8)$&64&$(v_7, v_5)$&67\\\hline
$(v_3, v_1)$&71&$(v_3, v_6)$&65&$(v_1, v_7)$&65\\\hline
$(v_6, v_2)$&65&$(v_6, v_4)$&66&$(v_1, v_6)$&65\\\hline
$(v_5, v_4)$&66&$(v_4, v_8)$&71&$(v_1, v_4)$&65\\\hline
$(v_1, v_2)$&75&$(v_6, v_8)$&71&$(v_2, v_4)$&64\\\hline
$(v_2, v_8)$&64&$(v_7, v_8)$&71&$(v_3, v_2)$&65\\\hline
$(v_6, v_5)$&67&$(v_6, v_7)$&66&$(v_7, v_7)$&72\\\hline
$(v_5, v_3)$&65&$(v_1, v_5)$&65&$(v_8, v_8)$&88\\\hline
$(v_5, v_5)$&75&$(v_3, v_4)$&65&$(v_4, v_4)$&66\\\hline
$(v_3, v_8)$&67&$(v_3, v_3)$&68&$(v_2, v_2)$&75\\\hline
$(v_4, v_7)$&66&$(v_6, v_6)$&70&$(v_1, v_1)$\tnote{a}&96\\\hline
    \end{tabular}
    
     \begin{tablenotes}
         \item [a] Using parameters outlined in Table \ref{tab:ls-params}.
         \end{tablenotes}
     \end{threeparttable}
     \caption{Upper bounds obtained with $\textbf{STS}_{L\Hsquare L,r}$ for $\pi(L\Hsquare L)$ for all roots $r\in V(L\Hsquare L) \backslash \{(v_1, v_1)\}$ using parameters outlined in Table \ref{tab:ls-all-params} unless noted otherwise.}
    \label{tab:ls-bounds}
\end{table}

We again see the usefulness of the MILP framework in automating the generation of weight functions to prove pebbling bounds on large graphs. Using linear optimization to produce large sets of tree strategies proved useful in improving the pebbling bounds on the Lemke square found in \cite{Hurlbert2010}. The method has the benefit of providing verifiable proofs, producing a large set of tree strategies and the adaptability to be applied to more general classes of graphs. Through automating the generation of tree strategies, we are able to consider large sets of these strategies and produce low pebbling bounds, as shown in Table \ref{tab:ls-bounds}. Note that the pebbling bound dropped below 96 for all roots $r \neq (v_1, v_1)$, and often very quickly (typically in less than 5 minutes using 32 threads). Further, these bounds were obtained using only 6 symmetric tree strategies. While the $\textbf{TS}_{L\Hsquare L, r}$ performed well on smaller graphs, it was unable to produce a bound better than $\pi(L\Hsquare L, (v_1, v_1)) \leq 98$ with 10 non-symmetric strategies. It should be noted that $\textbf{TS}_{L\Hsquare L, r}$  achieved the bound of 96 with 30 non-symmetric strategies. Nonetheless, the $\textbf{TS}_{L\Hsquare L, r}$ produced similar results to $\textbf{STS}_{L\Hsquare L, r}$ for roots $r \neq (v_1, v_1)$, displaying the potential to produce useful upper bounds. However, it is evident that leveraging the symmetry of graph products significantly improved performance on the Lemke square. \\ 
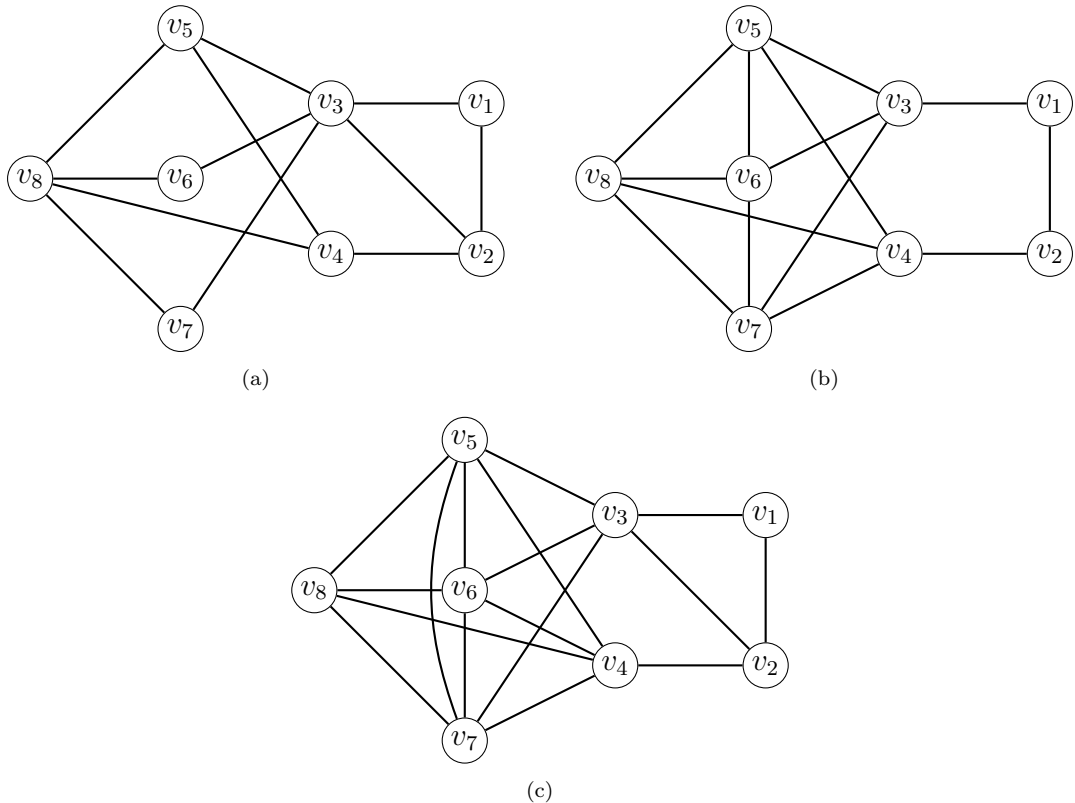
\begin{figure}[tb]
    \centering
    \begin{subfigure}{.5\textwidth}
    \centering
    \begin{tikzpicture}[scale=0.5, auto,swap, vertex/.style={circle,draw,minimum size=6mm,inner sep=0}]
    \foreach \pos/\name/\label in {{(0,0)/v_6/}, {(0,4)/v_5/}, {(0,-4)/v_7/}, {(4,2)/v_3/}, {(8,2)/v_1/}, {(4,-2)/v_4/}, {(8,-2)/v_2/}, {(-4,0)/v_8/}}
        \node[vertex, label={\footnotesize $\label$}] (\name) at \pos {$\name$};
    \foreach \source/ \dest in {{v_1/v_2},{v_1/v_3},{v_2/v_3},{v_2/v_4},{v_3/v_6},{v_3/v_5},{v_4/v_5},{v_3/v_7},{v_4/v_8},{v_5/v_8},{v_6/v_8},{v_7/v_8}}
        \path[edge] (\source) -- (\dest);
\end{tikzpicture}
    \caption{}
    \label{fig:lemke2-template}
\end{subfigure}%
\begin{subfigure}{.5\textwidth}
\centering
    \begin{tikzpicture}[scale=0.5, auto,swap,vertex/.style={circle,draw,minimum size=6mm,inner sep=0}]
    \foreach \pos/\name/\label in {{(0,0)/v_6/}, {(0,4)/v_5/}, {(0,-4)/v_7/}, {(4,2)/v_3/}, {(8,2)/v_1/}, {(4,-2)/v_4/}, {(8,-2)/v_2/}, {(-4,0)/v_8/}}
        \node[vertex, label={\footnotesize  $\label$}] (\name) at \pos {$\name$};
    \foreach \source/ \dest in {{v_1/v_2},{v_1/v_3},{v_2/v_4},{v_3/v_6},{v_3/v_5},{v_4/v_5},{v_4/v_7},{v_5/v_6},{v_6/v_7},{v_3/v_7},{v_4/v_8},{v_5/v_8},{v_6/v_8},{v_7/v_8}}
        \path[edge] (\source) -- (\dest);
\end{tikzpicture}
    \caption{}
    \label{fig:lemke3-template}
\end{subfigure}
\centering 
\begin{subfigure}{.5\textwidth}
\centering 
\begin{tikzpicture}[scale=0.5, auto,swap, vertex/.style={circle,draw,minimum size=6mm,inner sep=0}]
    \foreach \pos/\name/\label in {{(0,0)/v_6/}, {(0,4)/v_5/}, {(0,-4)/v_7/}, {(4,2)/v_3/}, {(8,2)/v_1/}, {(4,-2)/v_4/}, {(8,-2)/v_2/}, {(-4,0)/v_8/}}
        \node[vertex, label={\small $\label$}] (\name) at \pos {$\name$};
    \foreach \source/ \dest in {{v_8/v_5}, {v_8/v_7}, {v_8/v_6}, {v_7/v_3}, {v_7/v_4}, {v_6/v_3}, {v_6/v_4}, {v_5/v_3}, {v_5/v_4}, {v_4/v_2}, {v_3/v_1}, {v_2/v_1}, {v_8/v_4}, {v_2/v_3},{v_5/v_6},{v_6/v_7}}
        \path[edge] (\source) -- (\dest);
    \path[edge] (v_5) edge [bend right=20] (v_7);
\end{tikzpicture}
\caption{}
\label{fig:lemke4-template}
\end{subfigure}
\caption{The second and third minimal, and only maximal non-isomorphic Lemke graphs (a) $L_2$, (b) $L_3$, and (c) $L_4$, respectively, on $n=8$ vertices.}
\label{fig:lemke23-templates}
\end{figure}
\indent Upon closer examination of the tree strategies produced by $\textbf{STS}_{L\Hsquare L,r}$ for $r = (v_1, v_1)$, we notice that some generated strategies are larger and more complex than others. For example, strategies $T_4$ and $T_{10}$, shown in Figures \ref{fig:ls-viz4} and \ref{fig:ls-viz10}, respectively, are comprised of only one other vertex and the root. These trivial strategies suggests that the bound of 96 can be found with fewer strategies, however the optimization solver could not find a feasible solution to the MILP that produced this bound due to the problem's computational complexity. While the objective value plateaued at 95.1999 ($33.8\%$ optimal by MIP gap) after running on 16 threads for 18 hours, the MILP appears to have wasted  two strategies in its computation. After much experimentation on $L\Hsquare L$ using different software, problem parameters and computational power, we are convinced that the best attainable bound using tree strategies on the Lemke square is in fact 96.  \\ 
\indent To further exhibit the versatility of our MILP approach, we apply $\mathbf{STS}_{G,r}$ to all 3 minimal, and the one maximal, non-isomorphic Lemke square graphs on 8 vertices. We denote the additional 2 minimal non-isomorphic Lemke graphs as $L_2$ and $L_3$, and the maximal Lemke $L_4$, shown in Figure \ref{fig:lemke23-templates}. Using the parameters outlined in Table \ref{tab:ls-all-params}, we are able to compute an upper bound for the rooted pebbling number on $L_2 \Hsquare L_2$, $L_3 \Hsquare L_3$ and $L_4 \Hsquare L_4$ for all roots, and obtain the pebbling bounds $\pi(L_2 \Hsquare L_2) \leq 96$, $\pi(L_3 \Hsquare L_3) \leq 96$ and $\pi(L_4 \Hsquare L_4) \leq 96$. The results for each root are presented in Tables \ref{tab:ls2-bounds}, \ref{tab:ls3-bounds} and \ref{tab:ls4-bounds} in \ref{subsec:lemke23-results}, and the certificates are offered in the accompanying repository.\footnote{https://github.com/dominicflocco/Graph\_Pebbling} Not only do these results offer further insight into properties of Lemke square graphs, they also display the utility of our MILP approach.

\section{Conclusion}
\label{sec:conclusion}
Through our research, we present a number of automation approaches to weight function generation in graph pebbling which yield improved pebbling bounds. Namely, we offer a MILP formulation that generates a set of tree strategies that attempt to minimize the pebbling bound of a graph. Experiments show that this approach is capable of producing weight function certificates that offer lower pebbling bounds than can feasibly be generated by hand. Results indicate that large sets of tree strategies often yields lower pebbling bounds, which highlights the power of applying computational approaches to pebbling bound generation. Our MILP approach to tree strategy optimization is successful in improving the bound on the Bruhat graph to 66 using only tree strategies, confirming the results of \cite{Cranston2015} and improving results of \cite{Hurlbert2010}. This technique is also capable of producing tree strategies on smaller graphs in a short amount of time, indicating that the MILP will be successful when applied to more general graphs, possibly arising from applications. \\ 
\indent The research was initially motivated by Graham's Conjecture \ref{conj:graham}, leading to the investigation of graph products and the Lemke square as a potential counterexample. The benefit of our approach is the generation of certificates that can be verified by hand and bypass the potential of precision error in arithmetic. Additionally, the MILP is a general purpose framework that can be applied to any connected graph, regardless of its structure or pebbling properties. Through leveraging the symmetry of Cartesian product graphs, we were able to produce a set of tree strategy certificates that prove $\pi(L\Hsquare L , r) \leq 96$ for all $r \in V(L\Hsquare L)$, which improves the results of Hurlbert \cite{Hurlbert2010}.  We also show that the upper bound of $\pi(L\Hsquare L , r)$ drops close to 64, and occasionally to equality, for some roots. These results exhibit the interesting properties of the Lemke square and its potential to be a counterexample for Graham's Conjecture. 

\section*{Acknowledgements}
The authors would like to thank the reviewer for their diligent and thoughtful review of the manuscript. 

\bibliographystyle{elsarticle-num} 
\bibliography{pebbling}

\begin{thebibliography}{10}
\expandafter\ifx\csname url\endcsname\relax
  \def\url#1{\texttt{#1}}\fi
\expandafter\ifx\csname urlprefix\endcsname\relax\def\urlprefix{URL }\fi
\expandafter\ifx\csname href\endcsname\relax
  \def\href#1#2{#2} \def\path#1{#1}\fi

\bibitem{Chung1989}
F.~R.~K. Chung, Pebbling in hypercubes, SIAM Journal on Discrete Mathematics 2 (1989) 467--472.

\bibitem{Herscovici2008}
D.~S. Herscovici, Graham’s pebbling conjecture on products of many cycles, Discrete Mathematics 308~(24) (2008) 6501--6512.
\newblock \href {https://doi.org/https://doi.org/10.1016/j.disc.2007.12.045} {\path{doi:https://doi.org/10.1016/j.disc.2007.12.045}}.

\bibitem{Pachter1995}
L.~Pachter, H.~Snevily, B.~Voxman, On pebbling graphs, Congressus Numerantium 107 (1995) 65--80.

\bibitem{Wang2000}
S.~S. Wang, Pebbling graham's conjecture, Discrete Mathematics 226 (2000) 431--438.

\bibitem{Czygrinow2002}
A.~Czygrinow, G.~Hurlbert, H.~Kierstead, W.~Trotter, A note on graph pebbling, Graphs and Combinatorics 18 (2002) 219--225.

\bibitem{Moews1992}
D.~Moews, Pebbling graphs, Journal of Combinatorial Theory, Series B 55~(2) (1992) 244--252.
\newblock \href {https://doi.org/https://doi.org/10.1016/0095-8956(92)90043-W} {\path{doi:https://doi.org/10.1016/0095-8956(92)90043-W}}.

\bibitem{Blasiak2008}
A.~Blasiak, J.~Schmitt, Degree sum conditions in graph pebbling, Australas. J. Combin. 42 (2008) 83--90.

\bibitem{Bukh2006}
B.~Bukh, Maximum pebbling number of graphs of diameter three, Journal of Graph Theory 52~(4) (2006) 353--357.
\newblock \href {https://doi.org/https://doi.org/10.1002/jgt.20187} {\path{doi:https://doi.org/10.1002/jgt.20187}}.

\bibitem{Postle2013}
L.~Postle, N.~Streib, C.~R. Yerger, Pebbling graphs of diameter three and four, J. Graph Theory 72 (2013) 398--417.

\bibitem{Watson2005}
N.~Watson, The complexity of pebbling and cover pebbling, arXiv: Combinatorics (2005).

\bibitem{Milans2006}
K.~Milans, B.~Clark, The complexity of graph pebbling, SIAM Journal on Discrete Mathematics 20 (2006) 769--798.

\bibitem{Cranston2015}
D.~W. Cranston, L.~Postle, C.~Xue, C.~Yerger, Modified linear programming and class 0 bounds for graph pebbling, Journal of Combinatorial Optimization 43 (2017) 114–132.

\bibitem{Hurlbert2010}
G.~Hurlbert, A linear optimization technique for graph pebbling, arXiv: Combinatorics (2010).

\bibitem{kenter2018}
F.~Kenter, D.~Skipper, Integer-programming bounds on pebbling numbers of cartesian-product graphs, in: Proceedings of the 12th International Conference on Combinatorial Optimization and Applications, 2018, pp. 681--695.
\newblock \href {https://doi.org/10.1007/978-3-030-04651-4_46} {\path{doi:10.1007/978-3-030-04651-4_46}}.

\bibitem{West2001}
D.~B. West, Introduction to Graph Theory, 2nd Edition, Prentice Hall, Upper Saddle River, N.J., 2001.

\bibitem{Kenter2019}
F.~Kenter, D.~Skipper, D.~Wilson, Computing bounds on product-graph pebbling numbers (2019).
\newblock \href {http://arxiv.org/abs/1905.08683} {\path{arXiv:1905.08683}}.

\bibitem{Cook2013}
W.~J. Cook, T.~Koch, D.~E. Steffy, K.~Wolter, A hybrid branch-and-bound approach for exact rational mixed-integer programming, Mathematical Programming Computation 5 (2013) 305--344.

\bibitem{Feng2001}
R.~Feng, J.~Y. Kim, Graham’s pebbling conjecture on product of complete bipartite graphs, Science in China Series A: Mathematics 44~(817) (2001) 470--478.

\bibitem{Hurlbert2000}
G.~Hurlbert, Two pebbling theorems, Congress. Numer 135 (2000) 6--3.

\bibitem{Bekmetjev2009}
A.~Bekmetjev, C.~Cusack, Pebbling algorithms in diameter two graphs, SIAM Journal on Discrete Mathematics 23 (2009) 634--646.
\newblock \href {https://doi.org/10.1137/080724277} {\path{doi:10.1137/080724277}}.

\bibitem{Sieben2010}
N.~Sieben, A graph pebbling algorithm on weighted graphs, J. Graph Algorithms Appl. 14 (2010) 221--244.

\bibitem{Cusack2018}
C.~A. Cusack, M.~Powers, A.~Bekmetjev, Doppelgangers and lemke graphs, Discrete Mathematics 341~(10) (2018) 2686--2693.
\newblock \href {https://doi.org/https://doi.org/10.1016/j.disc.2018.06.028} {\path{doi:https://doi.org/10.1016/j.disc.2018.06.028}}.

\bibitem{Cusack2019}
C.~A. Cusack, A.~Green, A.~Bekmetjev, M.~Powers, Graph pebbling algorithms and lemke graphs, Discrete Applied Mathematics 262 (2019) 72--82.
\newblock \href {https://doi.org/https://doi.org/10.1016/j.dam.2019.02.028} {\path{doi:https://doi.org/10.1016/j.dam.2019.02.028}}.

\end{thebibliography}

\newpage 
\appendix
\section{Appendix}
\label{sec:appendix}

\subsection{Rooted pebbling number bounds on other non-isomorphic Lemke graphs.}
\label{subsec:lemke23-results}
\def\arraystretch{1.2}%
\begin{table}[!ht]
\centering
 \begin{threeparttable}[t]
    \centering
    \begin{tabular}{|c|c||c|c||c|c|}\hline
    $r$ & $\pi(L_2\Hsquare L_2, r) \leq$ &$r$ & $\pi(L_2\Hsquare L_2, r) \leq$  &$r$ & $\pi(L_2\Hsquare L_2, r) \leq$  \\\hline\hline
    
$(v_8, v_5)$&72&$(v_2, v_7)$&65&$(v_3, v_7)$&65\\\hline
$(v_5, v_2)$&65&$(v_1, v_8)$&64&$(v_7, v_5)$&67\\\hline
$(v_3, v_1)$&71&$(v_3, v_6)$&65&$(v_1, v_7)$&67\\\hline
$(v_6, v_2)$&65&$(v_6, v_4)$&67&$(v_1, v_6)$&68\\\hline
$(v_5, v_4)$&68&$(v_4, v_8)$&71&$(v_1, v_4)$&66\\\hline
$(v_1, v_2)$&75&$(v_6, v_8)$&71&$(v_2, v_4)$&67\\\hline
$(v_2, v_8)$&64&$(v_7, v_8)$&71&$(v_3, v_2)$&65\\\hline
$(v_6, v_5)$&68&$(v_6, v_7)$&70&$(v_7, v_7)$&73\\\hline
$(v_5, v_3)$&64&$(v_1, v_5)$&65&$(v_8, v_8)$\tnote{a}&87\\\hline
$(v_5, v_5)$\tnote{a}&69&$(v_3, v_4)$&64&$(v_4, v_4)$&74\\\hline
$(v_3, v_8)$&65&$(v_3, v_3)$&65&$(v_2, v_2)$&71\\\hline
$(v_4, v_7)$&67&$(v_6, v_6)$&78&$(v_1, v_1)$\tnote{b}&96\\\hline
    \end{tabular}

     \begin{tablenotes}
         \item [a] Walltime 21,600 seconds.
         \item [b] Walltime 43,200 seconds.
         \end{tablenotes}
     \end{threeparttable}
     \caption{Upper bounds obtained with $\textbf{STS}_{L_2\Hsquare L_2,r}$ for $\pi(L_2\Hsquare L_2)$ for all roots $r\in V(L_2\Hsquare L_2)$ using parameters outlined in Table \ref{tab:ls-all-params} unless noted otherwise.}
    \label{tab:ls2-bounds}
\end{table}
\begin{table}[tb]
\centering
\def\arraystretch{1.2}%
 \begin{threeparttable}[t]
    \centering
    \begin{tabular}{|c|c||c|c||c|c|}\hline
    $r$ & $\pi(L_3\Hsquare L_3, r) \leq$ &$r$ & $\pi(L_3\Hsquare L_3, r) \leq$  &$r$ & $\pi(L_3\Hsquare L_3, r) \leq$  \\\hline\hline
$(v_8, v_5)$&71&$(v_2, v_7)$&65&$(v_3, v_7)$&66\\\hline
$(v_5, v_2)$&64&$(v_1, v_8)$&65&$(v_7, v_5)$&67\\\hline
$(v_3, v_1)$&71&$(v_3, v_6)$&71&$(v_1, v_7)$&65\\\hline
$(v_6, v_2)$&65&$(v_6, v_4)$&66&$(v_1, v_6)$&65\\\hline
$(v_5, v_4)$&66&$(v_4, v_8)$&71&$(v_1, v_4)$&64\\\hline
$(v_1, v_2)$&78&$(v_6, v_8)$&72&$(v_2, v_4)$&71\\\hline
$(v_2, v_8)$&65&$(v_7, v_8)$&72&$(v_3, v_2)$&65\\\hline
$(v_6, v_5)$&72&$(v_6, v_7)$&72&$(v_7, v_7)$&70\\\hline
$(v_5, v_3)$&66&$(v_1, v_5)$&65&$(v_8, v_8)$\tnote{a}&90\\\hline
$(v_5, v_5)$&69&$(v_3, v_4)$&65&$(v_4, v_4)$\tnote{a}&69\\\hline
$(v_3, v_8)$&65&$(v_3, v_3)$&69&$(v_2, v_2)$&67\\\hline
$(v_4, v_7)$&66&$(v_6, v_6)$&91&$(v_1, v_1)$\tnote{b}&96\\\hline

    \end{tabular}
    
     \begin{tablenotes}
         \item [a] Walltime 21,600 seconds.
          \item [b] Walltime 43,200 seconds.
         \end{tablenotes}
     \end{threeparttable}
     \caption{Upper bounds obtained with $\textbf{STS}_{L_3\Hsquare L_3,r}$ for $\pi(L_3\Hsquare L_3)$ for all roots $r\in V(L_3\Hsquare L_3)$ using parameters outlined in Table \ref{tab:ls-all-params} unless noted otherwise.}
    \label{tab:ls3-bounds}
\end{table}

\def\arraystretch{1.2}%
\begin{table}[tb]
\centering
 \begin{threeparttable}[t]
    \centering
    \begin{tabular}{|c|c||c|c||c|c|}\hline
    $r$ & $\pi(L_4\Hsquare L_4, r) \leq$ &$r$ & $\pi(L_4\Hsquare L_4, r) \leq$  &$r$ & $\pi(L_4\Hsquare L_4, r) \leq$  \\\hline\hline
    
$(v_7, v_4)$&68&$(v_3, v_4)$&64&$(v_7, v_7)$&69\\ \hline$(v_3, v_7)$&64&$(v_5, v_6)$&68&$(v_3, v_6)$&64\\ \hline$(v_2, v_8)$&65&$(v_1, v_3)$&76&$(v_8, v_8)$\tnote{a}&90\\ \hline$(v_3, v_5)$&64&$(v_4, v_2)$&64&$(v_4, v_5)$&68\\ \hline$(v_6, v_7)$&68&$(v_2, v_3)$&65&$(v_1, v_8)$&64\\ \hline$(v_6, v_1)$&66&$(v_3, v_3)$&65&$(v_5, v_7)$&68\\ \hline$(v_5, v_5)$&69&$(v_1, v_1)$\tnote{b}&96&$(v_2, v_5)$&64\\ \hline$(v_6, v_8)$&76&$(v_8, v_7)$&72&$(v_5, v_1)$&66\\ \hline$(v_6, v_4)$&68&$(v_2, v_6)$&64&$(v_2, v_7)$&65\\ \hline$(v_2, v_2)$&71&$(v_3, v_8)$&64&$(v_4, v_8)$&73\\ \hline$(v_1, v_2)$&80&$(v_1, v_4)$&66&$(v_7, v_1)$&66\\ \hline$(v_4, v_4)$&69&$(v_5, v_8)$&76&$(v_6, v_6)$&69\\ \hline
    \end{tabular}

     \begin{tablenotes}
         \item [a] Walltime 21,600 seconds.
         \item [b] Walltime 43,200 seconds.
         \end{tablenotes}
     \end{threeparttable}
     \caption{Upper bounds obtained with $\textbf{STS}_{L_4\Hsquare L_4,r}$ for $\pi(L_4\Hsquare L_4)$ for all roots $r\in V(L_4\Hsquare L_4)$ using parameters outlined in Table \ref{tab:ls-all-params} unless noted otherwise.}
    \label{tab:ls4-bounds}
\end{table}
\clearpage

\subsection{Lemke Square Certificates}
\label{subsec:app-cert}

\begin{table}[!h]

\begin{minipage}{.5\linewidth}

    \centering
    
    \resizebox{\linewidth}{!}{%
    \begin{tabular}{|c|c|c|c|c|c|c|c|c|} \hline
        
    $T_1$&$v_1$&$v_2$&$v_3$&$v_4$&$v_5$&$v_6$&$v_7$&$v_8$ \\\hline
$v_1$&&&40.00&&&&&\\\hline
$v_2$&&&20.00&1.88&10.00&10.00&10.00&1.50\\\hline
$v_3$&&&&&&&&\\\hline
$v_4$ &&&9.00&1.50&5.00&5.00&5.00&2.50\\\hline
$v_5$&&&&&2.50&2.25&&1.25\\\hline
$v_6$&&&3.75&&2.50&&2.50&1.25\\\hline
$v_7$&&&&1.25&2.50&1.37&2.50&1.25\\\hline
$v_8$&&&&1.00&1.25&2.50&2.50&1.25\\\hline
    \end{tabular}%
    }
    \label{tab:ls-cert1}
\end{minipage}%
\begin{minipage}{.5\linewidth}
    \centering
    \resizebox{.95\linewidth}{!}{%
    \begin{tabular}{|c|c|c|c|c|c|c|c|c|} \hline
        
    $T_2$&$v_1$&$v_2$&$v_3$&$v_4$&$v_5$&$v_6$&$v_7$&$v_8$\\\hline
$v_1$&&32.00&&&&&&\\\hline
$v_2$&&16.00&&&&&&\\\hline
$v_3$&&&&&&&&\\\hline
$v_4$ &&8.00&&4.00&1.50&&&\\\hline
$v_5$&&1.00&&&&&&\\\hline
$v_6$&&&&2.00&&1.00&1.00&1.00\\\hline
$v_7$&&&&&&&&\\\hline
$v_8$&&&&2.00&&&1.00&1.00\\\hline
    \end{tabular}%
    }
    \label{tab:ls-cert2}
    \end{minipage} 
    \caption{Certificates $T_1$ and $T_2$ for $\pi(L\Hsquare L) \leq 96$ proof.}
\end{table}

\begin{table}[!ht]
\begin{minipage}{.5\linewidth}

    \centering
    
    \resizebox{.82\linewidth}{!}{%
    \begin{tabular}{|c|c|c|c|c|c|c|c|c|} \hline
    $T_3$&$v_1$&$v_2$&$v_3$&$v_4$&$v_5$&$v_6$&$v_7$&$v_8$\\\hline
$v_1$&&1.00&&&&&&\\\hline
$v_2$&&&&&&&&\\\hline
$v_3$&&&&&&&&\\\hline
$v_4$ &&&&&&&&\\\hline
$v_5$&&&&&&&&\\\hline
$v_6$&&&&&&&&\\\hline
$v_7$&&&&&&&&\\\hline
$v_8$&&&&&&&&\\\hline

\end{tabular}%
    }

    \label{tab:ls-cert3}
    \end{minipage}%
\begin{minipage}{.5\linewidth}
    \centering
    \resizebox{\linewidth}{!}{%

    \centering
    \begin{tabular}{|c|c|c|c|c|c|c|c|c|} \hline
        
    $T_4$&$v_1$&$v_2$&$v_3$&$v_4$&$v_5$&$v_6$&$v_7$&$v_8$\\\hline
$v_1$&&&&&&&&\\\hline
$v_2$&&&&&&&&\\\hline
$v_3$&32.00&&&&&&&\\\hline
$v_4$ &&&&&&&&\\\hline
$v_5$&16.00&8.00&3.00&4.00&2.00&&&\\\hline
$v_6$&&&&1.00&&&&\\\hline
$v_7$&&&&&&&&\\\hline
$v_8$&2.00&&1.00&2.00&1.00&&&1.00\\\hline
    \end{tabular}%
    }

    \label{tab:ls-cert4}
\end{minipage}
    \caption{Certificates $T_3$ and $T_4$ for $\pi(L\Hsquare L) \leq 96$ proof.}
\end{table}

\begin{table}[!ht]

\begin{minipage}{.5\linewidth}

    \centering
    
    \resizebox{.92\linewidth}{!}{%
    \begin{tabular}{|c|c|c|c|c|c|c|c|c|} \hline
        
    $T_5$&$v_1$&$v_2$&$v_3$&$v_4$&$v_5$&$v_6$&$v_7$&$v_8$\\\hline
$v_1$&&&36.00&&&&&\\\hline
$v_2$&&&&&&&&\\\hline
$v_3$&&&18.00&&9.00&&9.00&1.00\\\hline
$v_4$ &&&&&1.00&&2.25&\\\hline
$v_5$&&&&&&&4.50&\\\hline
$v_6$&&&2.25&2.00&4.50&1.00&4.50&2.25\\\hline
$v_7$&&&7.50&&4.50&1.00&3.00&2.25\\\hline
$v_8$&&&&1.00&2.25&&&1.13\\\hline
    \end{tabular}%
    }
    \label{tab:ls-cert5}
\end{minipage}%
\begin{minipage}{.5\linewidth}
    \centering
    \resizebox{\linewidth}{!}{%
    \begin{tabular}{|c|c|c|c|c|c|c|c|c|} \hline
        
    $T_6$&$v_1$&$v_2$&$v_3$&$v_4$&$v_5$&$v_6$&$v_7$&$v_8$\\\hline
$v_1$&&&&&&&&\\\hline
$v_2$&&&&&&&&\\\hline
$v_3$&56.00&&&&&&3.50&\\\hline
$v_4$ &&&&&&&&1.75\\\hline
$v_5$&&1.00&&&&&&\\\hline
$v_6$&28.00&10.00&14.00&1.25&7.00&7.00&7.00&3.50\\\hline
$v_7$&&&&&&&1.00&\\\hline
$v_8$&&5.00&5.00&&3.13&3.50&3.50&1.75\\\hline
    \end{tabular}%
    }
    \label{tab:ls-cert6}
\end{minipage}
\caption{Certificates $T_5$ and $T_6$ for $\pi(L\Hsquare L) \leq 96$ proof.}
    
\end{table}

\begin{table}[!ht]
    \begin{minipage}{.5\linewidth}

    \centering
    
    \resizebox{.95\linewidth}{!}{%
    \begin{tabular}{|c|c|c|c|c|c|c|c|c|} \hline
        
    $T_7$&$v_1$&$v_2$&$v_3$&$v_4$&$v_5$&$v_6$&$v_7$&$v_8$\\\hline
$v_1$&&&48.00&&&&24.00&12.00\\\hline
$v_2$&&&&&&&10.00&6.00\\\hline
$v_3$&&&&&&&&6.00\\\hline
$v_4$ &&&&&&&5.00&3.00\\\hline
$v_5$&&&&&&&&3.00\\\hline
$v_6$&&&&&&&&3.00\\\hline
$v_7$&&&&1.00&&&2.50&3.00\\\hline
$v_8$&&&1.00&&&&2.25&1.50\\\hline
    \end{tabular}%
    }
    \label{tab:ls-cert7}
    \end{minipage}%
\begin{minipage}{.5\linewidth}
     \centering
    \resizebox{\linewidth}{!}{%
     \begin{tabular}{|c|c|c|c|c|c|c|c|c|} \hline
        
    $T_8$&$v_1$&$v_2$&$v_3$&$v_4$&$v_5$&$v_6$&$v_7$&$v_8$\\\hline
$v_1$&&&32.00&&16.00&&&6.00\\\hline
$v_2$&&&&&&&&3.00\\\hline
$v_3$&&&&&8.00&&&4.00\\\hline
$v_4$ &&&&&1.00&&&1.00\\\hline
$v_5$&&&&2.00&4.00&&&2.00\\\hline
$v_6$&&&&1.00&&1.00&1.00&2.00\\\hline
$v_7$&&&&&4.00&&1.00&2.00\\\hline
$v_8$&&&&1.00&2.00&&&1.00\\\hline
    \end{tabular}%
    } 
    \label{tab:ls-cert8}
    \end{minipage}
    \caption{Certificates $T_7$ and $T_8$ for $\pi(L\Hsquare L) \leq 96$ proof.}
\end{table}

\begin{table}[!ht]
    \begin{minipage}{.5\linewidth}

    \centering
    
    \resizebox{\linewidth}{!}{%
    \begin{tabular}{|c|c|c|c|c|c|c|c|c|} \hline
        
    $T_9$&$v_1$&$v_2$&$v_3$&$v_4$&$v_5$&$v_6$&$v_7$&$v_8$\\\hline
$v_1$&&44.00&&22.00&&&&\\\hline
$v_2$&&&&10.12&&&&4.50\\\hline
$v_3$&&&&11.00&&&&2.00\\\hline
$v_4$ &&&&4.50&&2.25&&2.00\\\hline
$v_5$&&&&5.50&1.50&1.00&1.75&2.75\\\hline
$v_6$&&&&5.50&2.75&&&1.00\\\hline
$v_7$&&&&5.50&2.75&1.63&&2.25\\\hline
$v_8$&&&&2.75&1.37&&&1.37\\\hline
    \end{tabular}%
    } 
    \label{tab:ls-cert9}
    \end{minipage}%
\begin{minipage}{.5\linewidth}
    \centering
    \resizebox{.85\linewidth}{!}{%
    \begin{tabular}{|c|c|c|c|c|c|c|c|c|} \hline
        
$T_{10}$&$v_1$&$v_2$&$v_3$&$v_4$&$v_5$&$v_6$&$v_7$&$v_8$\\\hline
$v_1$&&&&&&&&\\\hline
$v_2$&1.00&&&&&&&\\\hline
$v_3$&&&&&&&&\\\hline
$v_4$ &&&&&&&&\\\hline
$v_5$&&&&&&&&\\\hline
$v_6$&&&&&&&&\\\hline
$v_7$&&&&&&&&\\\hline
$v_8$&&&&&&&&\\\hline
    \end{tabular}%
    }
    \label{tab:ls-cert10}
    \end{minipage}
    \caption{Certificates $T_9$ and $T_{10}$ for $\pi(L\Hsquare L) \leq 96$ proof.}
\end{table}

\clearpage

\subsection{Lemke Square Strategy Visualizations}
\label{subsec:app-viz}
In Figures \ref{fig:ls-viz1}-\ref{fig:ls-viz10} below, labels $(i,j)$ on top of each node correspond to vertex $(v_{i+1}, v_{j+1})\in V(L\Hsquare L)$ used in Figure \ref{fig:lemke-template}, and labels on the bottom of each node represent vertex weights.

\begin{figure}[!ht]
    \centering
    \includegraphics[scale = 0.50]{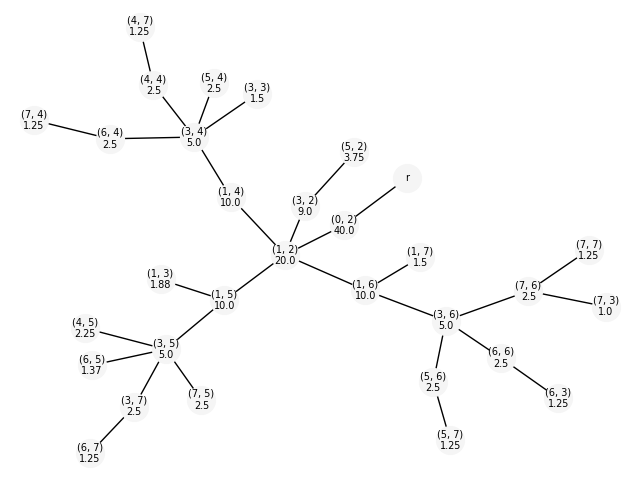}
    \caption{Visualization of $T_1$ for $\pi(L\Hsquare L) \leq 96$ proof.}
    \label{fig:ls-viz1}
\end{figure}

\begin{figure}[!ht]
    \centering
    \includegraphics[scale = 0.50]{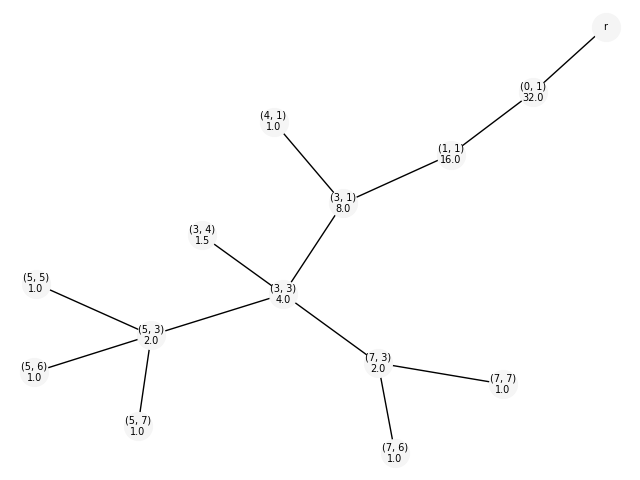}
     \caption{Visualization of $T_2$ for $\pi(L\Hsquare L) \leq 96$ proof.}
    \label{fig:ls-viz2}
\end{figure}

\begin{figure}[!ht]
    \centering
    \includegraphics[scale = 0.70]{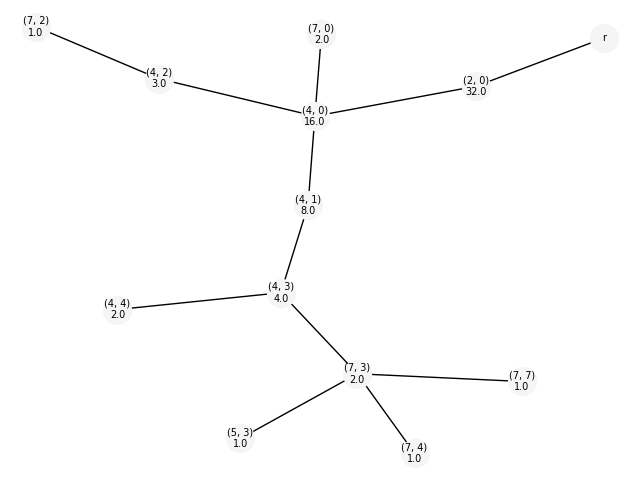}
     \caption{Visualization of $T_3$ for $\pi(L\Hsquare L) \leq 96$ proof.}
    \label{fig:ls-viz3}
\end{figure}

\begin{figure}[!ht]
    \centering
    \includegraphics[scale = 0.70]{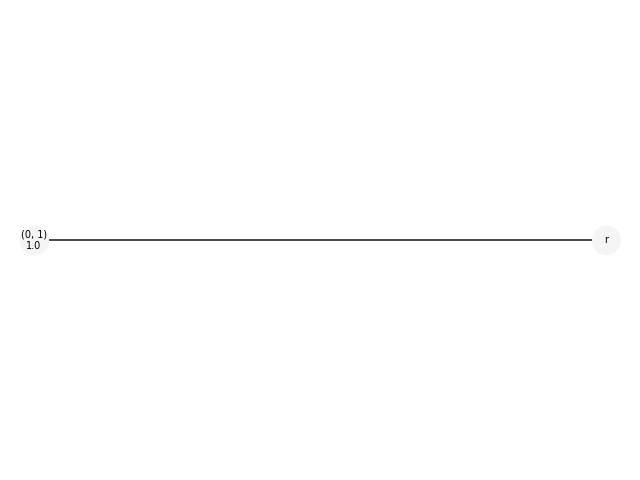}
     \caption{Visualization of $T_4$ for $\pi(L\Hsquare L) \leq 96$ proof.}
    \label{fig:ls-viz4}
\end{figure}

\begin{figure}[!ht]
    \centering
    \includegraphics[scale = 0.70]{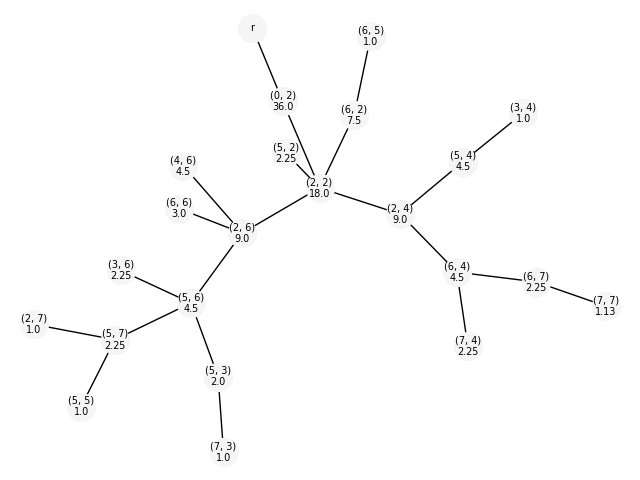}
     \caption{Visualization of $T_5$ for $\pi(L\Hsquare L) \leq 96$ proof.}
    \label{fig:ls-viz5}
\end{figure}

\begin{figure}[!ht]
    \centering
    \includegraphics[scale = 0.7]{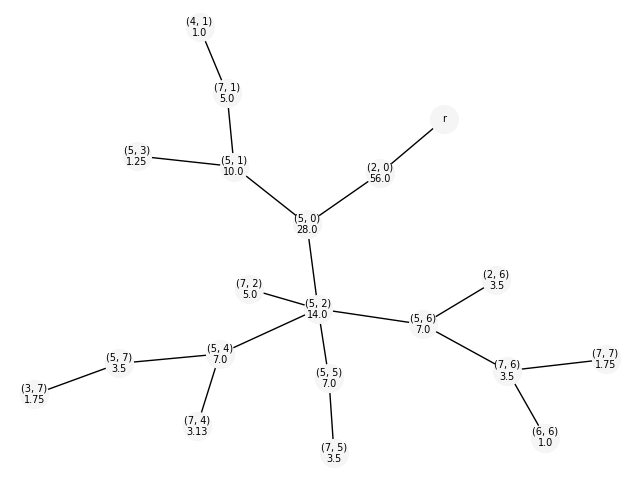}
     \caption{Visualization of $T_6$ for $\pi(L\Hsquare L) \leq 96$ proof.}
    \label{fig:ls-viz6}
\end{figure}

\begin{figure}[!ht]
    \centering
    \includegraphics[scale = 0.7]{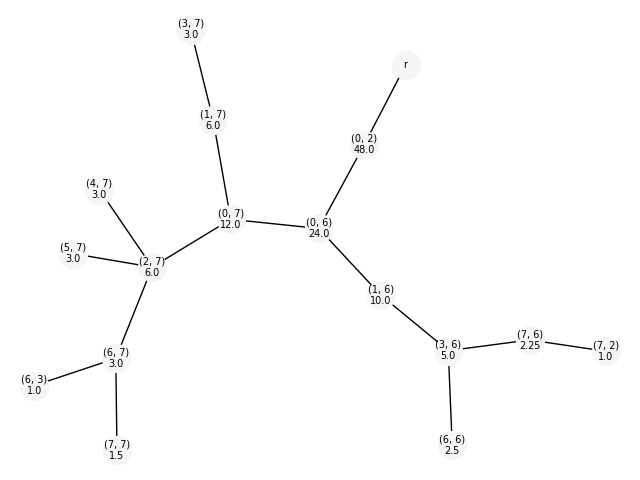}
    \caption{Visualization of $T_7$ for $\pi(L\Hsquare L) \leq 96$ proof.}
    \label{fig:ls-viz7}
\end{figure}

\begin{figure}[!ht]
    \centering
    \includegraphics[scale = 0.7]{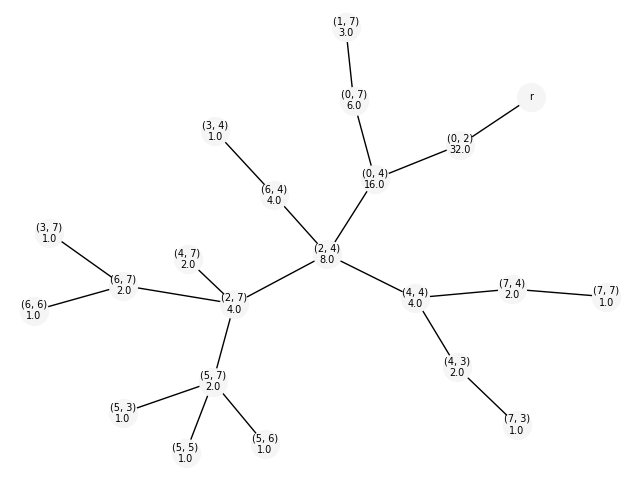}
     \caption{Visualization of $T_8$ for $\pi(L\Hsquare L) \leq 96$ proof.}
    \label{fig:ls-viz8}
\end{figure}

\begin{figure}[!ht]
    \centering
    \includegraphics[scale = 0.7]{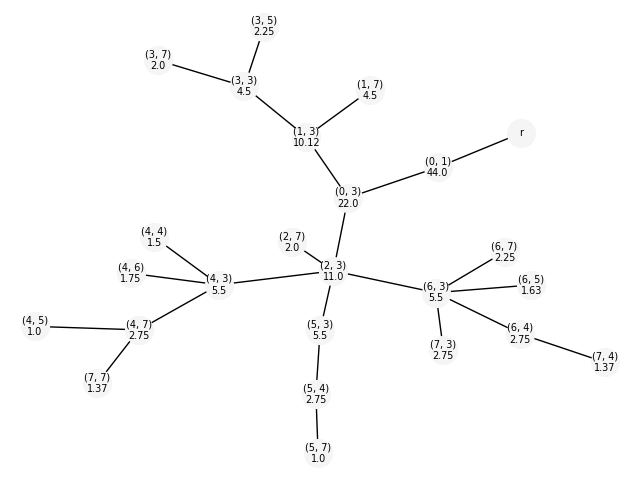}
    \ \caption{Visualization of $T_9$ for $\pi(L\Hsquare L) \leq 96$ proof.}
    \label{fig:ls-viz9}
\end{figure}

\begin{figure}[!ht]
    \centering
    \includegraphics[scale = 0.7]{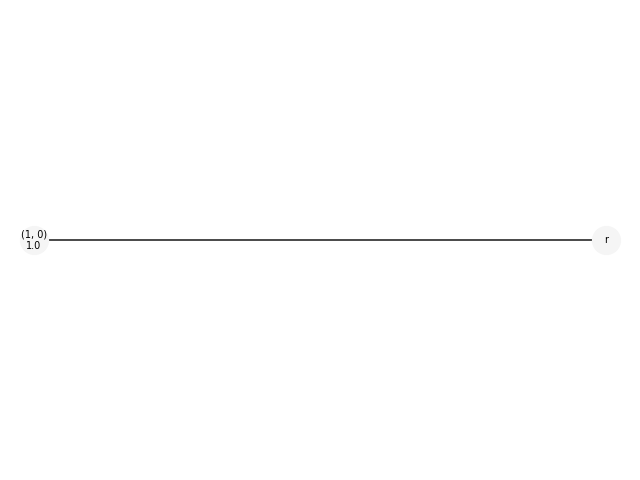}
     \caption{Visualization of $T_{10}$ for $\pi(L\Hsquare L) \leq 96$ proof.}
    \label{fig:ls-viz10}
\end{figure}





\end{document}